\newtheorem{thm}{Theorem}[section]
\newtheorem{lem}[thm]{Lemma}
\newtheorem{rem}{Remark}[section]
\numberwithin{equation}{section}
\renewcommand{\a}{\alpha}
\renewcommand{\b}{\beta}
\newcommand{\e}{\varepsilon}
\newcommand{\fa}{\varphi}
\newcommand{\la}{\lambda}
\newcommand{\Ga}{\Gamma}
\newcommand{\Om}{\Omega}
\def\ap{{\a_+}}
\def\am{{\a_-}}
\def\apm{{\a_\pm}}
\def\us{{U_0}}
\def\usz{{U_{0z}}}
\def\uszz{{U_{0zz}}}
\def\use{U_{0e_j}}
\def\ul{U_1}
\def\ulz{U_{1z}}
\def\ulzz{U_{1zz}}
\def\pe{\partial_e}
\def\pvi{\partial_{x_i}}
\def\pei{\partial_{e_i}}
\def\pvj{\partial_{x_j}}
\def\pej{\partial_{e_j}}
\def\D{D_{ij}}
\def\Dk{D_{ik}}
\def\o{\mathcal{O}}
\def\vp{{\varphi}}
\def\mij{\mu_{ij}}
\def\msij{\mu_{ij}^1}
\def\mlij{\mu_{ij}^2}
\def\ml{{\mathcal{L}}}
\def\R{{\mathbb{R}}}
\title{Motion of sharp interface of Allen-Cahn equation with anisotropic nonlinear diffusion}
\author{Tadahisa Funaki$^\ast$ and Hyunjoon Park$^\dagger$}
\date{\today}
\begin{document}
\maketitle

\begin{abstract}

We consider the Allen-Cahn equation with nonlinear anisotropic diffusion 
and derive anisotropic direction-dependent curvature flow under the 
sharp interface limit.  The anisotropic curvature flow was already 
studied, but its derivation is new.  We prove both generation and 
propagation of the interface.  For the proof we construct sub- 
and super-solutions applying the comparison theorem. The problem 
discussed in this article naturally appeared in the study of the 
interacting particle systems, especially of non-gradient type. 
The Allen-Cahn equation obtained from systems of gradient type has 
a simpler nonlinearity in diffusion and leads to isotropic mean-curvature flow. 
We extend those results to anisotropic situations.

\end{abstract}

\footnote{\hskip -6.5mm
$^\ast$Beijing Institute of Mathematical Sciences and Applications, 
No.\ 544, Hefangkou Village, Huaibei Town, Huairou District, Beijing 101408, China.
e-mail: funaki@ms.u-tokyo.ac.jp \\
$^\dagger$Meiji Institute for Advanced Study of Mathematical Sciences, Meiji University, 4-21-1 Nakano, Tokyo 164-8525, Japan. 
e-mail: hyunjoonps@gmail.com

\vskip 2mm
\noindent
\thanks{MSC 2020:   35B25, 35B40, 35K57, 82C24.}

\noindent
\thanks{Keywords: 
Allen-Cahn equation, Anisotropic curvature flow, Singular limit, Nonlinear diffusion, Interface problems.}
}

\section{Introduction}

The Allen-Cahn equation with nonlinear diffusion has a natural physical
background.  However, compared to those with linear diffusion,
they seem to be less studied.  In general, nonlinear partial differential
equations, which describe macroscopic phenomena, are derived from
microscopic systems via a certain scaling limit especially under an
averaging effect due to local ergodicity of the system; 
see \cite{Sp91}, \cite{KL}.  In particular,
the linear Laplacian arises at macroscopic level when molecules at
microscopic level evolve independently. However, if molecules evolve
with interaction, we obtain a nonlinear Laplacian instead of linear.
Especially when the interaction of microscopic particles has    
a special structure called of gradient type, which gives a good
cancellation in scaling limit, we obtain nonlinearity of the form
$\Delta \varphi(u)$ for particle density $u$ with a certain nonlinear
increasing function $\varphi$; see Remark \ref{rem:1.1}-case (i), 
\cite{F18} (writing $P$ instead of $\fa$).  More generally,
from microscopic systems called of non-gradient type, we obtain
a nonlinear differential operator of second order
$\sum_{i,j=1}^N \partial_{x_i}\{D_{ij}(u) \partial_{x_j}u\}$
of divergence form.  The diffusion coefficients $\{D_{ij}(u)\}$
are known to be described by the so-called Green-Kubo formula;
see \cite{Sp91}, \cite{F23}, \cite{FUY}.  The reaction term $f(u)$ in the Allen-Cahn
equation reflects the creation and annihilation of microscopic
particles.

In this article we study the Allen-Cahn equation with nonlinear anisotropic diffusion. Namely, we consider the following Cauchy problem of partial differential equation:
\begin{align}\tag{$P^\e$}\label{eq:pe}
    \begin{cases}
        \ml(u^\e)
        :=
        \partial_t u^\e 
        - \sum\limits_{i,j=1}^N \partial_{x_i}\big\{ D_{ij}(u^\e) \partial_{x_j} u^\e \big\} 
        -
        \dfrac{1}{\e^2}f(u^\e)
        &\text{in}~ \Omega \times (0,T),
        \\
        \dfrac{\partial u^\e}{\partial \nu} = 0
        &\text{on}~ \partial \Omega \times (0,T),
        \\
        u^\e(x,0) = u_0(x)
        &x \in \Omega,
    \end{cases}
\end{align}
where $N \ge 2$ is the spatial dimension, $\Om$ is a smooth bounded domain in $\R^N$, $\e$ is a small positive number, $\nu$ is the outward normal vector on the boundary $\partial \Om$ and $u_0(x)$ is a bounded and $C^2$ function in $\Om$. The function $f$ is a bistable reaction term with three roots $f(\alpha_+) = f(\a) = f(\a_-) = 0, \a_- < \a < \a_+$ and satisfying 
\begin{align}\label{cond:f-bistable}
    f'(\a_\pm) < 0,~ \nu := f'(\a) > 0,~f \in C^2(\R).
\end{align}
The term $(\D(s))_{1 \le i,j \le N}$ is symmetric and strictly positive definite matrix for $s \in \R$. We further assume the existence of some positive constants $c_D, C_D > 0$ satisfying 
\begin{align}\label{cond:D_pos}
    c_D \le \sum_{i,j = 1}^N \D(s) \eta_i \eta_j \le C_D
    ,~
    \Vert \D \Vert_{C^3(\R)} \le C_D
\end{align}
where $s \in \R, \eta \in \R^d, |\eta| = 1$. Moreover, we assume equipotential condition to $\D$ and $f$:
\begin{align}\label{cond:Df_equi}
    \int_{\a_-}^{\a_+} \D(s) f(s) ds = 0,
\end{align}
for all $1 \le i,j \le N$.

For the initial condition $u_0(x)$ we assume that $u_0 \in C^2(\Om)$. Throughout the paper, we define $c_0$ as follows:
\begin{align}
	c_0
	&:= || u_0 || _{C^2 \left( \Om \right)}.
    \label{cond:C0}
\end{align}
Furthermore, we define $\Gamma_0$ by
\begin{align*}
	\Gamma_0 
	:= 
	\{
		x \in \Om: u_0(x) = {\color{black}\a}
	\}
\end{align*}
and we suppose $\Gamma_0$  is a $C^{4+\nu}, 0 < \nu < 1$, hypersurface without boundary such that
\begin{align}
	\Gamma_0 \Subset \Om, \nabla u_0(x) \cdot n(x) \neq 0 \text{ if}~ x \in \Gamma_0, \label{cond:gamma0_normal} \\
	u_0 > \alpha \text{ in } \Om_0^+, ~~~~~~ u_0 < \alpha \text{ in } \Om_0^- ,\label{cond:u0_inout}
\end{align}
where $\Om_0^-$ denotes the region enclosed by $\Gamma_0$, $\Om_0^+$ the region enclosed between $\partial \Om$ and $\Gamma_0$ and $n$ is the outward normal vector to $\Ga_0$.

As $\e \to 0$, the reaction term prevails over the diffusion term, thus the limit solution will take either $\a_+$ or $\a_-$ and a hypersurface $\Ga_t$ (which we call an interface) occurs that separates the two stable steady states. From this observation we expect two stages to take place for the solution $u^\e$ of \eqref{eq:pe}: (I) in the early stage the diffusion term is negligible compared to the reaction term $\e^{-2} f(u)$, hence the solution $u^\e$ can be approximated by ordinary differential equation $u_t = \e^{-2}f(u)$. This implies that the solution $u^\e$ quickly converges close to either $\ap$ or $\am$, creating a steep transition layer. (II) After the creation of the steep transition layer, it starts to propagate. And, from the limiting behavior of $u^\e$, one can expect that the movement of this steep transition layer can be described by an interface $\Ga_t$. In fact, the interface propagates according to the following motion equation (see Section \ref{sec:formal_asympt} for details) 
\begin{align}\tag{$P^0$}\label{P0}
    \begin{cases}
        V_n 
        =
        - \sum_{i,j = 1}^N \mij(n) \pvi n_j 
        &
        \text{on~}\Gamma_t,
        \\
        \Gamma_t\Big|_{t = 0} = \Gamma_0,
    \end{cases}
\end{align}
where $V_n$ is the outward normal velocity of $\Ga_t$, $n = (n_i)_{i = 1, \cdots N}$ is the outward normal vector to $\Ga_t$, and $\mij$ is a function on $\mathbb{S}^{N - 1} = \{e\in \R^N; |e|=1\}$ defined as
\begin{align*}
    \mij(e)
    &=
    \dfrac{1}{\la(e)} 
    \int_\am^\ap 
    \left[
    \D(s) \sqrt{W_e(s)} 
    -\dfrac{\partial_{e_i} (W_e(s))}{2} 
    \pej
    \left(
        \dfrac{ a_e(s)}{\sqrt{W_e(s)}}
    \right)
    \right]
    ds,
    \\
    \la(e)
    &=
    \int_\am^\ap \sqrt{W_e(s)} ds
    ,~
    W_e(s) = -2 \int_\am^s a_e(s) f(s) ds
    ,~
    a_e(s) = e \cdot D(s) e.
\end{align*}
In our setting, the matrix $(\mij)$ becomes dependent on $n$ which gives an anisotropic feature to the interface motion. The well-posedness of the problem \eqref{P0} will be shown in Section \ref{sec:formal_asympt}. Hereafter, we let $T > 0$ be the time that $\Ga_t$ exists on $[0,T]$ and denote $\Om_t^-$ be the region enclosed by $\Ga_t$ and $\Om_t^+$ be the region enclosed by $\Ga_t$ and $\partial \Om$.

\begin{rem}  \label{rem:1.1}
The motion equation \eqref{P0} is general in the sense that it matches with the motion equation of the Allen-Cahn equation with (i) isotropic nonlinear diffusion or (ii) anisotropic diffusion without the nonlinear diffusivity. 
In the case of (i), where $D_{ii}(s) = \vp'(s)$ with a smooth increasing function $\vp$ and $\D = 0$ if $i \neq j$, we no longer have $e$ dependency in every terms, thus the second term in $\mij(e)$ vanishes and the motion equation becomes
\begin{align*}
    V_n = -\tilde{\la} \kappa,
\end{align*}
where $\kappa$ is a mean curvature and $\tilde{\la}$ is some constant which depends on $\vp$ and $ f$, and this coincides with the result of \cite{EFHPS}, \cite{EFHPS-2}, \cite{FvMST}. 
In the case of (ii), the matrix $\D$ no longer depends on $s$ and the term $a_e$ depends only on $e$, thus these terms can be considered as constant during the computation of $\mij$, which yields 
\begin{align*}
    \mij(e)
    =
    \D 
    -{\pei (\sqrt{a_e}) \pej (\sqrt{a_e})}.
\end{align*}
This implies that $\mij$ becomes independent from the reaction term $f$ and the resulting motion equation can be simplified to
\begin{align*}
    \dfrac{V_n}{\sqrt{a_n}}
    =
    -
    {\rm div}
    \left(
        \dfrac{\partial_n a_n}{2\sqrt{a_n}}
    \right),
\end{align*}
which coincides with the motion equation introduced in \cite{AGHMS2009}, \cite{BP1996}, where $a_n = a_e$ with $e = n$. 
\end{rem}

The aim of this article is to rigorously prove that the solution $u^\e$ of \eqref{eq:pe} converges to a step function with boundary $\Ga_t$ following the anisotropic curvature flow \eqref{P0} as $\e$ tends to 0. For this we give an error estimate between $u^\e$ and the $\Ga_t$ by constructing a pair of sub- and super-solutions, thus implying that the solution $u^\e$ converges to the step function $\tilde{u}$, where
\begin{align*}
    \tilde{u}(x,t)
    =
    \begin{cases}
        \ap
        &
        \text{in} ~ \Om_t^+,
        \\
        \am
        &
        \text{in} ~ \Om_t^-,
    \end{cases}
    ~~
    \text{for}
    ~ t \in [0,T].
\end{align*}

We first give the result of the generation of the interface. This theroem implies that, given  artibrary initial condition satisfying \eqref{cond:C0}-\eqref{cond:u0_inout} the solution $u^\e$ creates a steep transition layer within a short time of $\o(\e^2|\ln\e|)$ around the initial interface $\Ga_0$ with width $\o(\e)$, separating the steady states $\ap$ and $\am$.
\begin{thm}\label{thm:gen}
Let $u^\e$ be the solution of the problem $(P^\e)$, $\eta_g$ be an arbitrary constant satisfying $0 < \eta_g < \eta_0$, where $\eta_0 := \min \{ \ap - \a, \a - \am \}$. Then, there exist positive constants $\e_0$ and $M_0$ such that, for all $\e \in (0, \e_0)$, the following holds: 

\begin{enumerate}[label = (\roman*)]
\item for all $x \in \Om$
\begin{align}\label{thm:gen_1}
	\am - \eta_g
	\leq
	u^\e(x,t^\e)
	\leq
	\ap + \eta_g,
\end{align}

\item if $u_0(x) \geq \a +  M_0 \e$, then
\begin{align}\label{thm:gen_2}
	u^\e(x,t^\e) \geq \ap - \eta_g,
\end{align}

\item if $u_0(x) \leq \a - M_0 \e$, then
\begin{align}\label{thm:gen_3}
	u^\e(x,t^\e) \leq \am + \eta_g.
\end{align}
\end{enumerate}
Here $t^\e = \nu^{-1} \e^2 \ln \e$, and recall \eqref{cond:f-bistable} for $\nu$.

\end{thm}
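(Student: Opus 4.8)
The plan is to reduce Theorem~\ref{thm:gen} to a sharp analysis of the pointwise reaction dynamics and then to invoke the comparison principle for \eqref{eq:pe}, in the spirit of the classical generation-of-interface arguments for linear diffusion used in \cite{EFHPS}, \cite{FvMST}. Introduce the solution $Y=Y(\tau,\xi)$ of $\partial_\tau Y=f(Y)$, $Y(0,\xi)=\xi$, which governs the $\e$-independent rescaled ODE $u_t=\e^{-2}f(u)$ with $\tau=t/\e^2$. Since $u_0$ is bounded by $c_0$ and $f$ is bistable with the only zeros $\am<\a<\ap$, every trajectory starting from $\xi\in[-c_0-1,c_0+1]$ stays in this fixed compact set $K$ and converges to $\ap$ or $\am$ according as $\xi>\a$ or $\xi<\a$. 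On $K$ one has, for $\tau$ up to order $|\ln\e|$, the monotonicity $\partial_\xi Y>0$ together with $0<\partial_\xi Y(\tau,\xi)\le C e^{\nu\tau}$ and $|\partial_\xi^2 Y(\tau,\xi)|\le C e^{\nu\tau}\,\partial_\xi Y(\tau,\xi)$, with $C$ depending only on $f$; here $\nu=f'(\a)$ from \eqref{cond:f-bistable}, and the second estimate follows from the first via $\partial_\xi^2 Y=\partial_\xi Y\int_0^\tau f''(Y)\,\partial_\xi Y\,d\sigma$ and $f\in C^2$. One also needs the convergence estimate: there are $M_1>0$ and $\e_0>0$ so that, writing $\tau_\e:=t^\e/\e^2$, for $\e\in(0,\e_0)$ one has $\am-\eta_g\le Y(\tau_\e,\xi)\le\ap+\eta_g$ for all $\xi\in K$, while $Y(\tau_\e,\xi)\ge\ap-\eta_g$ if $\xi\ge\a+M_1\e$ and $Y(\tau_\e,\xi)\le\am+\eta_g$ if $\xi\le\a-M_1\e$.

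Next I would build a supersolution $w^+_\e$ and a subsolution $w^-_\e$ of \eqref{eq:pe} on $\Om\times[0,t^\e]$ of the form
\begin{align*}
	w^\pm_\e(x,t)=Y\!\left(\frac{t}{\e^2},\; u_0(x)\pm \e^2\mu\bigl(e^{\nu t/\e^2}-1\bigr)\right),
\end{align*}
with a constant $\mu>0$ to be chosen. Putting $\tau=t/\e^2$ and differentiating, the identity $\partial_\tau Y=f(w^\pm_\e)$ cancels the reaction term $\e^{-2}f(w^\pm_\e)$ in $\ml(w^\pm_\e)$; the time derivative of the $x$-independent shift contributes $\pm\mu\nu e^{\nu\tau}\,\partial_\xi Y$; and since $\nabla w^\pm_\e=\partial_\xi Y\,\nabla u_0$, applying $\sum_{i,j}\pvi\{D_{ij}(\cdot)\pvj\cdot\}$ produces only terms of the schematic form $D_{ij}'(\partial_\xi Y)^2\,\pvi u_0\,\pvj u_0$, $D_{ij}\,\partial_\xi^2 Y\,\pvi u_0\,\pvj u_0$ and $D_{ij}\,\partial_\xi Y\,\pvi\pvj u_0$, each of which is bounded in absolute value by $C(c_0,C_D)\,e^{\nu\tau}\,\partial_\xi Y$ by \eqref{cond:D_pos}, \eqref{cond:C0} and the estimates on $Y$ above. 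Consequently $\ml(w^+_\e)\ge \mu\nu e^{\nu\tau}\partial_\xi Y-C(c_0,C_D)e^{\nu\tau}\partial_\xi Y\ge 0$ once $\mu$ is taken large enough depending only on $c_0,C_D$, and symmetrically $\ml(w^-_\e)\le 0$. The Neumann condition holds since $\partial w^\pm_\e/\partial\nu=\partial_\xi Y\,(\partial u_0/\partial\nu)=0$ (the compatibility $\partial u_0/\partial\nu=0$ on $\partial\Om$ being harmless to assume because $\Ga_0\Subset\Om$), and $w^\pm_\e(\cdot,0)=Y(0,u_0)=u_0$.

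The comparison principle for \eqref{eq:pe} then gives $w^-_\e(x,t)\le u^\e(x,t)\le w^+_\e(x,t)$ on $\Om\times[0,t^\e]$. Evaluating at $t=t^\e$, the shift $\e^2\mu(e^{\nu t^\e/\e^2}-1)$ is nonnegative and, by the form of $t^\e$, is of order $\e$, say $\le\mu\e$; hence for $\e$ small the arguments of $Y$ lie in $K=[-c_0-1,c_0+1]$, and applying the convergence estimate with $M_0:=M_1+\mu$ yields $\am-\eta_g\le u^\e(x,t^\e)\le\ap+\eta_g$ for every $x$, which is (i). If $u_0(x)\ge\a+M_0\e$ then the argument of $w^-_\e$ at $t^\e$ is $\ge\a+M_0\e-\mu\e=\a+M_1\e$, so $u^\e(x,t^\e)\ge w^-_\e(x,t^\e)\ge\ap-\eta_g$, which is (ii); and (iii) follows from the symmetric estimate on $w^+_\e$.

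The main obstacle is the sharp ODE analysis of the first step: the uniform-in-$\xi$ bound $\partial_\xi Y(\tau,\xi)\le Ce^{\nu\tau}$ requires distinguishing starting points within distance comparable to $e^{-\nu\tau}$ of $\a$, where $\partial_\xi Y$ genuinely grows like $e^{\nu\tau}$, from those farther away, where the trajectory leaves a neighbourhood of $\a$ in bounded time and $\partial_\xi Y$ stays bounded; and the threshold $M_1\e$ of order $\e$ in the convergence estimate must be calibrated so that a starting point at distance $M_1\e$ from $\a$ still reaches the $\eta_g$-neighbourhood of $\apm$ within the rescaled time $\tau_\e$ of order $|\ln\e|$. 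Both are by now classical for linear diffusion. The genuinely new feature here is only that the nonlinear anisotropic operator $\sum\pvi\{D_{ij}(u)\pvj u\}$ contributes the three families of terms listed above to $\ml(w^\pm_\e)$; since each is dominated by $e^{\nu\tau}\partial_\xi Y$, they are all absorbed into the single large constant $\mu$ and do not interact with the ODE estimates.
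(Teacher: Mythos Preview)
Your proposal is correct and follows essentially the same approach as the paper: the same ODE $Y$ with the same estimates (the paper's Lemma~\ref{lem:gen}, quoted from \cite{AHM2008}), the same sub-/super-solutions $w^\pm(x,t)=Y\bigl(t/\e^2,\,u_0(x)\pm\e^2 C_g(e^{\nu t/\e^2}-1)\bigr)$, and the same absorption of the three families of diffusion terms into the single constant $\mu$ (their $C_g$). The only cosmetic difference is that you make the Neumann boundary check explicit, while the paper leaves it implicit.
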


After the generation, the steep transition layer propagates due to the effect of the diffusion. The theorem below implies that this transition layer propagates close to $\Ga_t$ within the distance $\o(\e)$.

\begin{thm}\label{thm:prop}
Under the conditions given in Theorem \ref{thm:gen}, for any given $0 < \eta  < \eta_0$ there exist $\e_0 > 0, C_p > 0$ and $T > 0$ such that for any $\e \in (0,\e_0)$ and $t \in [t^\e, T]$ we have
\begin{align*}
u^\e (x,t) \in~
	\begin{cases}
    [\alpha_{-} - \eta, \alpha_{+} + \eta] 
	&\text{ if }
	x \in \Om,
    \\
	[\alpha_{+} - \eta, \alpha_{+} + \eta] 
	& \text{ if } x \in \Om_t^+ \setminus \mathcal{N}_{ \e C_p}(\Ga_t),
	\\
	[\alpha_{-} - \eta, \alpha_{-} + \eta] 
	& \text{ if } x \in \Om_t^- \setminus \mathcal{N}_{ \e C_p }(\Ga_t),
	\end{cases}
\end{align*}
where $\mathcal{N}_r(\Ga_t) := \{ x \in \Om, {\rm dist}(x, \Ga_t) \le r \}$ is the $r$-neighborhood of $\Ga_t$. 
\end{thm}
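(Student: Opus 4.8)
The plan is to prove Theorem~\ref{thm:prop} by the standard sub- and super-solution method, building on the generation result Theorem~\ref{thm:gen} as the starting input. The key object is an approximate solution built from the one-dimensional "standing wave" profiles associated with the anisotropic operator. Concretely, for each unit vector $e$ one solves the ODE that arises from the formal asymptotic expansion (see Section~\ref{sec:formal_asympt}), producing a profile $\us = \us(z,e)$ connecting $\am$ at $z=-\infty$ to $\ap$ at $z=+\infty$, together with a first-order corrector $\ul$. The equipotential condition \eqref{cond:Df_equi} is exactly what makes the leading profile exist with the right limits and guarantees exponential convergence $\us(\pm\infty,e)=\apm$ with uniform (in $e$) exponential rates, which I would record as a preliminary lemma. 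One then defines
\begin{align*}
u^{\pm,\e}(x,t)
=
\us\!\left(\frac{d^{\pm}(x,t)}{\e},\, n^{\pm}(x,t)\right)
+
\e\, \ul\!\left(\frac{d^{\pm}(x,t)}{\e},\, n^{\pm}(x,t)\right)
\pm
q(t),
\end{align*}
where $d^{\pm}$ is a signed-distance-type function to a perturbed interface $\Ga_t^{\pm}$ obtained by flowing $\Ga_0$ (slightly shrunk/expanded) under a mildly modified version of \eqref{P0}, $n^{\pm}$ is its normal, and $q(t)$ is a small time-dependent shift of the form $q(t)=\e(\beta - \beta e^{-\gamma t/\e^2}) + \e^{?}\cdot(\text{something})$ chosen to absorb the error terms. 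The well-posedness and regularity of $\Ga_t^{\pm}$ on a common interval $[0,T]$, with $T$ the existence time of the true flow $\Ga_t$, is supplied by the analysis in Section~\ref{sec:formal_asympt}.

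The second step is the verification that $u^{+,\e}$ is a super-solution and $u^{-,\e}$ a sub-solution of $(P^\e)$ on $[t_1^\e, T]$ for a suitable $t_1^\e$ of order $\e^2$, i.e.\ that $\ml(u^{+,\e})\ge 0 \ge \ml(u^{-,\e})$ with the correct Neumann behaviour near $\partial\Om$ (here one uses $\Ga_0\Subset\Om$ so the interface stays away from the boundary, making the boundary condition easy to arrange by cutting off or by noting $d^\pm$ is unaffected near $\partial\Om$). Plugging the ansatz into $\ml$ and using that $\us,\ul$ solve the profile ODEs, the leading $O(\e^{-2})$ and $O(\e^{-1})$ terms cancel by construction; the $O(1)$ terms split into (a) a term proportional to $V_n + \sum_{i,j}\mij(n)\partial_{x_i}n_j$ evaluated along $\Ga_t^\pm$, which is made favorably signed by the small modification in the definition of $\Ga_t^\pm$ (the usual $\mp \text{const}\cdot\e$ speed perturbation), (b) curvature-type terms and derivatives of $d^\pm$ that are bounded because $d^\pm$ is smooth away from the interface and $|\na d^\pm|=1$ near it, and (c) the reaction linearization $f'(\apm)q(t)/\e^2$ coming from $\pm q(t)$, which is strictly negative for the super-solution side because $f'(\apm)<0$ — this is the restoring mechanism that dominates the leftover $O(1)$ errors once $q$ is chosen appropriately, mirroring \cite{EFHPS}, \cite{FvMST}. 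One must be careful that the $e$-derivatives $\partial_{e_j}\us$, $\partial_{e_j}\ul$ are well-defined and decay, since here the profiles genuinely depend on the direction $n^\pm(x,t)$; this is where the $D\in C^3$ assumption \eqref{cond:D_pos} gets used.

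The third step is to fit the initial data: at time $t_1^\e$ (just after the generation window $t^\e=\nu^{-1}\e^2\ln\e$) Theorem~\ref{thm:gen} tells us $u^\e(\cdot,t^\e)$ is already $\eta_g$-close to $\apm$ on the respective sides and trapped in $[\am-\eta_g,\ap+\eta_g]$ globally; combined with the control \eqref{cond:u0_inout}, \eqref{cond:gamma0_normal} on where $u_0$ crosses $\a$ and a short additional parabolic smoothing/ODE comparison on $[t^\e, t_1^\e]$, one gets $u^{-,\e}(\cdot,t_1^\e)\le u^\e(\cdot,t_1^\e)\le u^{+,\e}(\cdot,t_1^\e)$ by choosing the shift $q(t_1^\e)$ and the interface perturbation size $\propto\e$ large enough (this fixes $C_p$). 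Then the comparison theorem for $(P^\e)$ propagates the ordering to all $t\in[t_1^\e,T]$, and since $t_1^\e \le t^\e$ up to reparametrization — more precisely one extends trivially to $[t^\e,t_1^\e]$ — we obtain $u^{-,\e}\le u^\e\le u^{+,\e}$ on $[t^\e,T]$. Finally, away from $\mathcal{N}_{\e C_p}(\Ga_t)$ the argument $d^\pm/\e$ is of order $\pm\infty$ in sign with $|d^\pm|\gtrsim \e C_p$, so $\us(d^\pm/\e,\cdot)$ is within $\eta/2$ of $\apm$ and the corrector and shift are $O(\e)$; this yields the three claimed inclusions. The main obstacle I expect is the super-/sub-solution verification in the genuinely anisotropic, $s$-dependent case: unlike the isotropic setting, the profile ODE has $e$-dependent coefficients through $a_e(s)=e\cdot D(s)e$, so one must differentiate the profiles in $e$, control the resulting second fundamental form contractions against $\mij(n)$, and check that \emph{that} particular combination — not merely the mean curvature — is what appears at order $O(1)$; getting the bookkeeping of these anisotropic terms exactly right, and confirming \eqref{cond:Df_equi} suffices for solvability of the corrector equation for every direction simultaneously, is the delicate part.
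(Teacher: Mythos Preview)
Your overall strategy is correct and matches the paper's: build barriers from the profile $\us$ plus the first corrector $\ul$, verify $\ml(u^+)\ge 0\ge \ml(u^-)$ by cancelling the $O(\e^{-2})$ and $O(\e^{-1})$ terms via the profile and corrector equations, and then sandwich $u^\e$ at $t^\e$ using Theorem~\ref{thm:gen}. The paper, however, makes a different structural choice: it does \emph{not} introduce perturbed interfaces $\Gamma_t^\pm$ flowing under a modified \eqref{P0}. Instead it works with the single true interface $\Gamma_t$ and shifts the stretched variable, $z_d = (d \pm \e p(t))/\e$ with $p(t)=-e^{-\beta t/\e^2}+e^{Lt}+K$, together with $q(t)=\sigma(\beta e^{-\beta t/\e^2}+\e^2 L e^{Lt})$. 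Your perturbed-interface route (Chen-style) would also work, but costs you two $\e$-dependent anisotropic flows; the paper's choice keeps all $\e$-dependence in the explicit scalar functions $p,q$ and needs only the one flow from Lemma~\ref{lem:d0-WP}.

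Two points in your sketch need tightening. First, $\us(\cdot;e)$ and $\ul(\cdot;e)$ are defined only for $e\in\mathbb{S}^{N-1}$, i.e.\ where $|\nabla d|=1$; the paper handles this by cutting off $d$ and then gluing $\tilde u^\pm$ to the constant $\alpha_\pm$ via $u^\pm=(1-\rho_1(d))\tilde u^\pm+\rho_1(d)\rho_2(d)\pm q$, which you do not mention. Second, your $q$ is too vague and possibly too small: the paper's $q(0)=\sigma\beta+O(\e^2)$ is a small but $\e$-\emph{independent} constant, which is what absorbs the $\eta_g$-error from generation (the paper takes $\eta_g=\sigma\beta/2$); an $O(\e)$ shift would not suffice. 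Also, your restoring argument via $f'(\alpha_\pm)q/\e^2$ is only half the story: in the transition region $f'(\us)$ need not be negative, and the paper instead uses $\sigma^{-1}\usz - [a_e(\us)_{zz}+f'(\us)]\ge 4B$ on $|z|\le Z_0$ (exploiting $\usz>0$) combined with \eqref{eq:f'bound} for $|z|\ge Z_0$; see the treatment of $E_4$ in Lemma~\ref{lem:Prop_subsuper}.
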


The rest of the paper is organized as follows.
In Section \ref{sec:formal_asympt} we give a formal asymptotic analysis to derive the motion equation \eqref{P0}. In case of anisotropic diffusion without $u$ dependency, formal derivation was done in \cite{BP1996} by using a Finsler geometry. In this article, it is difficult to use the similar approach due to the $u$ dependency, which leads us to take different method. The argument is based on the formal derivation of \cite{NMHS1999}, with the additional idea to describe the anisotropic effect. In addition, we will also show that \eqref{P0} possesses a unique smooth solution locally in time.

In Section \ref{sec:generation} we prove the generation of a steep transition layer within a short time of scale $\o(\e^2 |\ln \e|)$. For this we construct sub- and super-solutions using the solution of the ordinary differential equation $Y_\tau = f(Y)$. In Section \ref{sec:propagation} we construct another sub- and super-solutions by using two leading terms of the formal asymptotic expansion in Section \ref{sec:formal_asympt}. 

Let us mention some earlier works on anisotropic problems related to \eqref{eq:pe}. In \cite{BHW2004} Benes, Hilhorst and Weidenfeld study the case of anisotropic diffusion without the $u$ dependency, showing the generation and the propagation by anisotropic  curvature flow. Later, in \cite{AGHMS2009} Alfaro et. al. improve the previous work by considering the heterogeneity and study more general interface motion. The anisotropic diffusivity in these papers is more general in the sense that it covers the ellipsoidal diffusion. We also mention the work of Garcke, Nestler and Stoth \cite{GNS1998} for a related work of generalized anisotropic diffusion having the $u$ dependency in the context of multi-phase system.

The problem of the singular limit for \eqref{eq:pe} discussed in this article
naturally appeared in the study of the interacting particle system called
Glauber-Kawasaki dynamics, especially, of non-gradient type; see \cite{F23}.
The models of gradient type led to the same problem but only for
isotropic nonlinear diffusion of type (i) discussed in Remark 1.1;
see \cite{EFHPS}, \cite{FvMST}.  Our results hold also on the $N$-dimensional torus
${\mathbb{T}}^N \cong [0,1)^N$ with the periodic boundary condition
and therefore, they are applicable in the setting of \cite{F23}.

Another comment is that our equation does not have any proper energy
functional, that is, it cannot be expressed as a gradient flow and
therefore, the method of the $\Gamma$-convergence seems not working.

\section{Formal asymptotic expansion}\label{sec:formal_asympt}

In this section we give a formal asymptotic expansion to derive the interface motion corresponding to Problem \eqref{eq:pe} using the argument introduced in \cite{NMHS1999}. Even though the computation in this section is formal, it provides a helpful intuition for the analysis in later sections.

We start from the assumption that $d^\e=d^\e(x,t)$ is the signed distance function to the interface $\Ga^\e_t := \{ x \in \Om, u^\e(x,t) = \a \}$ defined by
\begin{align*}
    d^\e(x,t)
    =
    \begin{cases}
        {\rm dist}(v,\Ga^\e_t)
        &
        \text{for}~x \in \Om^{\e,+}_t
        \\
        -{\rm dist}(v,\Ga^\e_t)
        &
        \text{for}~x \in \Om^{\e,-}_t,
    \end{cases}
\end{align*}
where $\Om^{\e,+}_t$ is the area enclosed by $\Ga^\e_t$ and $\Om^{\e,-}_t$ is the area enclosed between $\partial \Om^\e$ and $\Ga^\e_t$. Following the idea of \cite{NMHS1999} we assume that $d^\e$ has the expansion
\begin{align*}
    d^\e(x,t)
    =
    d_0(x,t) + \e d_1(x,t) + \e^2 d_2(x,t) + \cdots,
\end{align*}
and define $\Ga_t = \{ x \in \Om, d_0(x,t) = 0 \}$. In this way, $\Ga_t$ represents the interface of $u^\e$ as $\e \to 0$ and $d_0$ can be considered as the signed distance function of $\Ga_t$. 

We assume that $u^\e$ also has similar expansion to $d^\e$. Thus, away from the interface $\Ga_t$ we assume
\begin{align*}
    u^\e(x,t)
    =
    \alpha_\pm + \e u_1^\pm(x,t) + \e^2 u_2^\pm(x,t) + \cdots
    ~
    \text{in}
    ~Q_T^\pm
    ,~~
    Q_T^{ \pm} = \cup_{0 < t \le T} \big( \Om_t^{\pm} \times \{t\} \big).
\end{align*}
 Similarly, we assume $u^\e$ has the expansion
\begin{align*}
    u^\e(x,t)
    =
    \us(z,x,t; e) + \e \ul(z,x,t; e) + \e^2 U_2(z,x,t; e) + \cdots, z = \frac{d^\e}{\e}
\end{align*}
near the interface $\Ga_t$. The dependency of $e$ can be derived by considering the anisotropic diffusion on a level set of $u^\e$. For each level set of $u^\e$ the diffusion depends on two factors; the diffusivity matrix $D(u)$ and the direction of the diffusion $\nabla u^\e/|\nabla u^\e|$. The first factor can be described just by using $u^\e$ itself. And from the fact that the steep transition has of width $\o(\e)$, one can expect that the direction of the gradient $\nabla u^\e/|\nabla u^\e|$ can be approximated by $\nabla d^\e$. Moreover, since we expect $d^\e$ converge to $d_0$ as $\e \to 0$, we approximate $\nabla d^\e $ by $ \nabla d_0$. From now on, we denote $e = \nabla d_0$ in this section. 

To make the inner and outer expansions consistent, it is necessary that
\begin{align}\label{eqn_form1}
    \us(\pm \infty,x,t; e) = \alpha_\pm
    ,~~
    U_i(\pm\infty,x,t; e) = u_i^\pm(x,t),
\end{align}
for all $i \ge 1$.  
In this way, the function $\us$ represents the profile of the transition layer near the interface in a stretched variable.
Also, in order to normalize the function $U_i, i \ge 0$, using the fact that $\Ga^\e_t$ is the level set $u^\e$ of value $\a$ we assume
\begin{align}\label{eqn_form2}
    \us(0,x,t;e) = \a,~U_i(0,x,t;e) = 0,
\end{align}
where $i \ge 1$.

 Under this assumption, we search for the suitable candidate of $\us$ and $\ul$ by substituting the inner expansion into the equation \eqref{eq:pe}. By noting that  $|\nabla d^\e|=1$ near $\Ga^\e_t$, direct computation gives 
\begin{align*}
    \partial_t u &= \frac1\e \usz \partial_t d^\e +O(1), \\
    \frac1{\e^2} f(u) &= \frac1{\e^2} f(\us) + \frac1{\e} f'(\us) U_1 + \o(1)
\end{align*}
for the time derivative term and the reaction term. For the space derivative term, we first observe 
\begin{align*}
\partial_{x_j} u 
= \frac1\e \usz  \partial_{x_j} d^\e
+ \sum_{k = 1}^{N} U_{0 e_k} \partial_{x_j} \partial_{x_k} d^\e
+\ulz \partial_{x_j} d^\e
+ \pvj \us + \o(\e),
\end{align*}
where $\usz$ and $U_{0e_k}$ are the derivatives of $\us$ with respect to $z$ and $e_k$ respectively.   
Then,
\begin{align}
    \partial_{x_i}\{ D_{ij}(u) \partial_{x_j} u\}
    & =\frac1{\e^2} \Big(D_{ij}(\us) \usz\Big)_z 
    \left(\partial_{x_i} d_0 \, \partial_{x_j} d_0
    \right.\nonumber
    \\
    &\left.
        + \e ( \pvi d_0 \, \pvj d_1  + \pvi d_1 \, \pvj d_0)
    \right)\nonumber
    \\
    &+ \frac1\e
    \Big[ 
        \Big( D_{ij}(\us) \ul\Big)_{zz} \pvi d_0 \, \pvj d_0\nonumber
    \\
    &
    +
        D_{ij}(\us)\usz \, \partial_{x_i} \partial_{x_j} d_0\nonumber
    \\
    &
    +
    \sum_{k = 1}^N 2
    \left(
    	D_{ij}(\us)U_{0e_k}
    \right)_z
    \partial_{x_i} d_0 \, \partial_{x_j} \partial_{x_k} d_0\nonumber
    \\
    &
    +
    \pvi (\D(\us) \usz) \pvj d_0
    +
    (\D(\us) \pvj \us)_z \pvi d_0
    \Big]
    + \o(1).\label{eq:formalasymp}
\end{align}
We collect the terms of scale $\o(\frac{1}{\e^2})$ and $\o(\frac{1}{\e})$.
Taking the terms of order $\o(\frac1{\e^2})$, 
we have
\begin{align*}
\frac1{\e^2} 
\left[
\sum_{i,j=1}^N \big\{ D_{ij}(\us) 
\usz\big\}_z   \partial_{x_i} d_0 \, \partial_{x_j} d_0
 + f(\us)
 \right] = 0.
\end{align*}
As this equation holds independent of $x$ and $t$, we can assert $\us(z,x,t;e) = \us(z;e)$. Thus, considering the matching conditions \eqref{eqn_form1} and normalization condition \eqref{eqn_form2}, $\us(z;e)$ is the unique solution of
\begin{align}\label{eq:TW-e}
&\begin{cases}
    \big(a_e(\us)\usz\big)_z  + f(\us)
    =0, \quad z \in \R,
    \\
    \us(0) = \a,~ \us(\pm \infty) = \apm,
\end{cases}
    \\
    &
    ~~a_e(s) :=  e \cdot D(s) e
    ,~ 
    A_e(s) := \int_{\a_-}^s a_e(t) dt,\nonumber
\end{align}
where $\cdot$ denotes the inner product in $\R^d$.
The existence is guaranteed under the condition \eqref{cond:Df_equi}; see \cite{EFHPS-2}.

Next we consider the terms of order $\o(\frac1\e)$. Since $\us$ is a function independent of $x$, the last two terms in \eqref{eq:formalasymp} vanishes. This allows us to obtain the following equation for $\ul$
\begin{align}\label{eqn:TW_lin,formal}
\Big( a_e (\us)  \, U_1\Big)_{zz} + f'(\us) U_1 
= & \usz \partial_t d_0 
-
D_{ij} (\us) \, \usz \partial_{x_i} \partial_{x_j} d_0 \nonumber
\\
&- \Big(D_{ij}(\us) \usz\Big)_z ( \pvi d_0 \, \pvj d_1  + \pvi d_1 \, \pvj d_0) 
\nonumber
\\
&
- 
\left(
	\pei(a_e)(\us)~\use
\right)_z
 \pvi \pvj d_0,
\end{align}
where $\pei(a_e)$ is the derivative of $a_e$ with respect to $e_i$ and we omitted the sum $\sum_{i,j = 1}^N$. The left hand side can be seen as the linearized problem of \eqref{eq:TW-e}, thus the solvability condition is important in understanding the interface motion equation. We give here the lemma related to this, which comes from \cite{EFHPS-2}.
\begin{lem}\label{lem:TWlin_sol}
 Let $G(z)$ be a bounded function on $\R$ and $e \in \mathbb{S}^{N - 1}$. Then the problem
    \begin{align}\label{eq:TW-lin}
        \begin{cases}
            (a_e(\us)\psi)_{zz} + f'(\us) \psi 
            =
            G(z)
            ,~~z \in \R
            \\
            \psi(0) = 0
            ,~~\psi \in L^\infty(\R),
        \end{cases}
    \end{align}
    has a unique solution if and only if 
    \begin{align}\label{lem:TWlin_sol1}
        \int_\R G(z)  (A_e(\us(z)))_z  dz
        =
        0.
    \end{align}
    Moreover, the solution can be written as 
    \begin{align*}
        \psi(z)
        =
        \usz
        \int_0^z \dfrac{1}{(A_e(\us(\xi))_z)^2} 
        \left( \int_{-\infty}^\xi G(\zeta) A_e(\us(\zeta))_z d \zeta \right)
        d \xi.
    \end{align*}
\end{lem}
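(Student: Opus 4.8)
The plan is to reduce \eqref{eq:TW-lin} to a first-order ODE by changing the unknown and then to integrate explicitly. First I would introduce the new function $\Phi := a_e(\us)\psi$, so that the equation becomes $\Phi_{zz} + \frac{f'(\us)}{a_e(\us)}\Phi = G(z)$. The associated homogeneous equation $\Phi_{zz} + \frac{f'(\us)}{a_e(\us)}\Phi = 0$ has an explicit solution: differentiating the travelling-wave equation \eqref{eq:TW-e} in $z$ gives $\big(a_e(\us)\usz\big)_{zz} + f'(\us)\usz = 0$, i.e. $(A_e(\us))_{zz} + \tfrac{f'(\us)}{a_e(\us)}(A_e(\us))_z\cdot\frac{a_e(\us)}{a_e(\us)}$ — more cleanly, $\varphi_0 := (A_e(\us))_z = a_e(\us)\usz$ solves $(\varphi_0)_{zz} + \frac{f'(\us)}{a_e(\us)}\varphi_0 = 0$. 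So $\varphi_0 = a_e(\us)\usz$ is a homogeneous solution which decays exponentially as $z\to\pm\infty$ (since $\us$ converges exponentially to $\apm$ and $f'(\apm)<0$, standard ODE asymptotics for \eqref{eq:TW-e}). A second, linearly independent homogeneous solution is obtained by reduction of order, $\varphi_1(z) = \varphi_0(z)\int_0^z \varphi_0(\xi)^{-2}\,d\xi$, which grows as $z\to\pm\infty$.

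Next I would apply variation of parameters. Writing the Wronskian $W(\varphi_0,\varphi_1) = 1$ (it is constant since there is no first-order term, and one checks the normalization gives $1$), the bounded-at-$-\infty$ particular solution can be written as
\begin{align*}
\Phi(z) = \varphi_0(z)\int_0^z \frac{1}{\varphi_0(\xi)^2}\left(\int_{-\infty}^\xi G(\zeta)\varphi_0(\zeta)\,d\zeta\right)d\xi + c\,\varphi_0(z),
\end{align*}
where the inner integral $\int_{-\infty}^\xi G\varphi_0$ converges because $G$ is bounded and $\varphi_0$ decays exponentially. The condition $\psi(0)=0$ forces $\Phi(0)=0$, and since $\varphi_0(0)=a_e(\a)\us_z(0)\neq 0$ this pins down $c=0$; then $\psi = \Phi/a_e(\us)$ gives exactly the claimed formula with $(A_e(\us))_z = \varphi_0$. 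The boundedness requirement $\psi\in L^\infty(\R)$ is where the solvability condition enters: as $z\to +\infty$, $\Phi(z)\sim \varphi_0(z)\int_0^{+\infty}\varphi_0(\xi)^{-2}\big(\int_{-\infty}^\xi G\varphi_0\big)d\xi$; since $\varphi_0^{-2}$ grows exponentially, for $\Phi$ (hence $\psi$) to stay bounded the coefficient of the growing mode must vanish, which after an integration by parts is precisely $\int_{-\infty}^{+\infty} G(\zeta)\varphi_0(\zeta)\,d\zeta = \int_\R G(z)(A_e(\us(z)))_z\,dz = 0$. Conversely, when \eqref{lem:TWlin_sol1} holds one rewrites $\int_0^z = \int_0^{+\infty} - \int_z^{+\infty}$ and uses $\int_{-\infty}^\xi G\varphi_0 = -\int_\xi^{+\infty}G\varphi_0$ for large $\xi$ to show the growing contributions cancel and $\psi$ decays, giving existence; uniqueness follows since the difference of two bounded solutions is a bounded homogeneous solution, hence a multiple of $\varphi_0$ vanishing at $0$, hence zero.

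The main obstacle I anticipate is the careful asymptotic bookkeeping at $z\to\pm\infty$: one must establish the exponential decay of $\varphi_0=a_e(\us)\usz$ and the exponential growth of $\varphi_0^{-2}$ with matching rates, control the iterated integrals, and verify via integration by parts that the obstruction to boundedness is exactly the stated orthogonality against $(A_e(\us))_z$ rather than against some other function. This is essentially the Fredholm alternative for the linearized operator made explicit, and the delicate point is handling both ends $\pm\infty$ simultaneously since $\psi$ must be bounded on all of $\R$; matching conditions \eqref{eqn_form1}–\eqref{eqn_form2} and the sign conditions \eqref{cond:f-bistable}, \eqref{cond:D_pos} on $f$ and $D$ are what make the rates work out. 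Since this lemma is quoted from \cite{EFHPS-2}, I would present the computation leading to the explicit formula and the solvability condition, and refer to that reference for the detailed decay estimates.
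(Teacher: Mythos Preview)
Your approach is correct and is the standard route to this result: reduce to the equation for $\Phi=a_e(\us)\psi$, identify $\varphi_0=(A_e(\us))_z$ as a decaying homogeneous solution by differentiating \eqref{eq:TW-e}, and apply reduction of order together with variation of parameters; the solvability condition \eqref{lem:TWlin_sol1} then arises exactly as the obstruction to boundedness at $+\infty$, while the choice of $-\infty$ as the lower limit of the inner integral handles the other end automatically. Note, however, that the paper does not give its own proof of this lemma: it is stated with the remark that it ``comes from \cite{EFHPS-2}'' and is used as a black box, so there is nothing in the paper to compare against. Your sketch is essentially the argument one finds in that reference, and your final paragraph already anticipates this by deferring the precise decay estimates there.
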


From this lemma, by considering the terms $\partial_t d_0, \, \pvi \pvj d_0, \, \pvi d_0  \pvi d_1 $ as coefficients, the solvability condition for $\ul$ in \eqref{eqn:TW_lin,formal} leads to the interface motion equation as follows 
\begin{align} \label{eq:d0-p}
    \partial_t d_0
    = \sum_{i,j = 1}^N \mij(\nabla d_0)  \pvi \pvj d_0
    ,
\end{align}
where 
\begin{align*}
    &\mij(e) = \msij(e) + \mlij(e), \\
    & \msij(e) = {\la(e)}^{-1}{\int_\R A_e(U_0)_z D_{ij}(U_{0}) \usz   dz}, \\
    & \mlij(e) = 2 {\la(e)}^{-1} {\sum_{k = 1}^N \int_\R A_e(U_0)_z \big( e_k \Dk(U_{0}) \use \big)_{z}  dz},\\
    & \la(e) = \int_\R A_e(U_0)_z \usz dz.
\end{align*}
Note that the term containing $\Big(D_{ij}(\us) \usz\Big)_z $ in \eqref{eqn:TW_lin,formal} does not appear in \eqref{eq:d0-p} since
\begin{align}
    \int_\R \Big(D_{ij}(\us) \usz\Big)_z (A_e(\us))_z dz
    &=
    - \int_\R \D(\us) \usz (A_e(\us))_{zz} dz\nonumber
    \\
    &=
    \int_\R \D(\us) f(\us) \usz dz\nonumber
    \\
    &= \int_\am^\ap \D(s) f(s) ds = 0,\label{eq:tw_1-1}
\end{align}
where the last inequality holds by \eqref{cond:Df_equi}. 
From \eqref{eq:d0-p} we now derive the interface motion equation \eqref{P0}. Since $\nabla d_0$ is equal to the outward normal vector to the interface $\Ga_t$ which we denote as $n$ and that $V = - \partial_t d_0$ we derive that
\begin{align*}
    V_n 
    =
    - \sum_{i,j = 1}^N \Big[\msij(n) + \mlij(n) \Big]\pvi n_j,
\end{align*}
thus with the initial condition $\Ga_0$ gives \eqref{P0}.

To understand the motion more clearly, we derive an explicit form of the coefficients in \eqref{eq:d0-p}. Note that by \eqref{eq:TW-e} we have
\begin{align}\label{eq:tw_1-2}
    A_e(\us)_z = \sqrt{W_e(\us)}
    ,~W_e(u) := - 2 \int_{\alpha_-}^u a_e(s) f(s) ds.
\end{align}
From this we can derive that
\begin{align}\label{eq:IM-1}
    \lambda(e)
    =
    \int_{\am}^\ap  \sqrt{W_e(s)} ds
    ,~~
    \la(e)\msij(e) = \int_\am^\ap \D(s) \sqrt{W_e(s)} ds.
\end{align}

For the term $\mlij$, we first need to understand the function $\use$. The existence of $\use$ is guaranteed by Lemma \ref{lem:TWlin_sol} and \eqref{eq:tw_1-1}; see Appendix of \cite{FH1988}. Moreover, by taking the derivative in $e_j$ directly to \eqref{eq:TW-e} we derive the following equation for $\use$
\begin{align*}
    (a_e(\us)\use)_{zz} + f'(\us) \use 
    =
    - (\pej (a_e)(\us) \usz)_z
    =
    -2 \sum_{k = 1}^N  e_k (D_{jk}(\us) \usz)_z.
\end{align*}
In addition, direct computation gives
\begin{align*}
    \int_\am^z A_e(\us)_z 
    \left( 
        e_k D_{jk}(\us)\usz 
    \right)_z~dz
    &=
    A_e(\us)_z e_k D_{jk}(\us)\usz 
    \\
    &+ \int_\am^\us e_k D_{jk}(s) f(s) ~ds
    \\
    &=
    A_e(\us)_z e_k D_{jk}(\us)\usz 
    - \dfrac{1}{4} \partial_{e_j} W_e(\us),
\end{align*}
where we omitted the summation $\sum_{k = 1}^N$. With this, we can obtain the explicit form of $\use$ by Lemma \ref{lem:TWlin_sol}
\begin{align}
    -\use
    &=
    \usz 
    \int_0^z 
        \dfrac{2 e_k D_{jk}(\us)\usz}{\sqrt{W_e(\us)}}
        - \dfrac{\partial_{e_j} W_e(\us)}{2W_e(\us)} 
    ~dz\nonumber
    \\
    &=
    \usz
    \int_0^z
        \dfrac{\partial_{e_j} (a_e) (\us)\usz}{\sqrt{W_e(\us)}}
        - \dfrac{\partial_{e_j} W_e(\us) a_e(\us) \usz}{2(W_e)^{3/2}(\us)} 
    ~dz\nonumber
    \\
    &=
    \usz
    \int_\a^\us
        \dfrac{\partial_{e_j} (a_e) (s)}{\sqrt{W_e(s)}}
        - \dfrac{\partial_{e_j} W_e(s) a_e(s)}{2(W_e)^{3/2}(s)} 
    ~ds\nonumber
    \\
    &=
    \usz
    \int_\a^\us
        \partial_{e_j}
        \left(
            \dfrac{ a_e(s)}{\sqrt{W_e(s)}}
        \right)
    ~ds.\label{eq:TW_lin_explicit}
\end{align}
From this, we can explicitly write $\mlij$ as follows
\begin{align}
    \la(e)\mlij(e)
    &=
    2 \sum_{k = 1}^N \int_\R e_k \Dk(\us) f(\us) \use ~dz\nonumber
    \\
    &=
    - \int_\R \partial_{e_i}(a_e)(\us) f(\us) 
    \left[
        \int_\a^\us
        \partial_{e_j}
        \left(
            \dfrac{ a_e(s)}{\sqrt{W_e(s)}}
        \right) ds
    \right]
    \usz ~dz\nonumber
    \\
    &=
    - \int_\am^\ap 
    \partial_{e_i} (a_e) (s) f(s) 
    \left[
        \int_\a^s
        \partial_{e_j}
        \left(
            \dfrac{ a_e(t)}{\sqrt{W_e(t)}}
        \right) dt
    \right]
    ds\nonumber
    \\
    &=
    \dfrac{1}{2} \int_\am^\ap 
    \partial_{e_i} (W_e(s))'
    \left[
        \int_\a^s
        \partial_{e_j}
        \left(
            \dfrac{ a_e(t)}{\sqrt{W_e(t)}}
        \right) dt
    \right]
    ds\nonumber
    \\
    &=
    - \dfrac{1}{2} \int_\am^\ap 
    \partial_{e_i} (W_e(s))
    \partial_{e_j}
    \left(
        \dfrac{ a_e(s)}{\sqrt{W_e(s)}}
    \right)
    ds.\label{eq:IM-2}
\end{align}

With \eqref{eq:IM-1} and \eqref{eq:IM-2} we are now ready to understand the well-posedness of the signed distance function. Using Theorem 2.1 of \cite{GG} it is enough to prove that 
\begin{align}\label{eq:mot-ellip}
    \sum_{i.j = 1}^d \tilde{\mu}_{ij}(e)  \eta_i \eta_j \ge C
    ,~
    \tilde\mu_{ij}(e) = \la(\msij(e) + \mlij(e))
\end{align}
for some positive constant $C$, where $\eta$ is a unit vector satisfying $e \cdot \eta = 0$. Namely, $(\tilde{\mu}_{ij})$ and therefore $(\mij)$ is non-degenerate to the tangential direction to the interface. Indeed, direct computation gives
\begin{align}
    \sum_{i,j = 1}^N 4 \tilde{\mu}_{ij}(e) \eta_i \eta_j
    &=
    \sum_{i,j = 1}^N \int_{\alpha_-}^{\alpha_+} 2 \partial_{e_i e_j}^2 a_e(s) \eta_i \eta_j \sqrt{W_e(s)}\nonumber
    \\
    &- \frac{\partial_{e_i} W_e(s)}{W_e(s)^{3/2}} \left( 2 \partial_{e_j} a_e(s) W_e(s) - a_e(s) \partial_{e_j} W_e(s) \right) \eta_i \eta_j \, ds\nonumber
    \\
    &=
    \sum_{i,j = 1}^N \int_{\alpha_-}^{\alpha_+} 2 \partial_{e_i e_j}^2 a_e(s) \eta_i \eta_j \sqrt{W_e(s)} \, ds\nonumber
    \\
    &+ \int_{\alpha_-}^{\alpha_+} \frac{1}{\sqrt{W_e(s)}}
    \left(
        -2 \overline{a}(s; e,\eta) \overline{W}(s; e,\eta)
        + \dfrac{a_e(s)}{W_e(s)} \overline{W}(s; e,\eta)^2
    \right) \, ds\nonumber
    \\
    &=
    \sum_{i,j = 1}^N \int_{\alpha_-}^{\alpha_+} 2 \partial_{e_i e_j}^2 a_e(s) \eta_i \eta_j \sqrt{W_e(s)} \, ds\nonumber
    \\
    &-
    \int_{\alpha_-}^{\alpha_+} W_e(s)^{-1/2} \dfrac{W_e(s)}{a_e(s)} \overline{a}(s; e,\eta)^2 \, ds\nonumber
    \\
    &+
    \int_{\alpha_-}^{\alpha_+} W_e(s)^{-1/2} \dfrac{a_e(s)}{W_e(s)} 
    \left(
        \dfrac{W_e(s)}{a_e(s)} \overline{a}(s; e,\eta) 
        - \overline{W}(s; e,\eta) 
    \right)^2 \, ds ,\label{eq:wp-1}
    \\
    \overline{a}(s;e,\eta) &:= \sum_{i = 1}^N \eta_i \partial_{e_i} a_e(s),~
    \overline{W}(s;e,\eta) := \sum_{i = 1}^N \eta_i \partial_{e_i} W_e(s).\nonumber
\end{align}
From the fact that $D(s) = (D_{ij})(s)$ is symmetric, for fixed $s$ we can find a diagonalization $\tilde{D}(s)$ of $D(s)$; thus there exists an orthonormal matrix $O(s)$ such that $D(s) = O(s) \tilde{D}(s) O(s)^t $ assume that $D(s)$ is a diagonal matrix by changing the axis; thus we can write $D(s) = (\tilde{D}_i(s))$. Let $\overline{e}(s;e), \overline{\eta}(s;\eta)$ be the vectors satisfying
\begin{align*}
    \sum_{i,j = 1}^N D_{ij}(s) e_i e_j = \sum_{i = 1}^N \tilde{D}_i \overline{e}_i^2
    ,~
    \sum_{i,j = 1}^N D_{ij}(s) \eta_i \eta_j = \sum_{i = 1}^N \tilde{D}_i \overline{\eta}_i^2.
\end{align*}
Thus, $\overline{e}, \overline{\eta}$ are the vectors equal to $e, \eta$ respectively but with different axis and satisfies $\overline{e} \cdot \overline{\eta} = 0$. This implies that
\begin{align*}
    \overline{a}(s;e,\eta)^2
    &= \left(\sum_{i = 1}^N 2 \tilde{D}_i(s) \overline e_i \overline\eta_i \right)^2
    =
    \left(\sum_{i = 1}^N 2 (\tilde{D}_i(s) - \underbar{D}(s)) \overline e_i \overline\eta_i \right)^2
    \\
    &\le
    4
    \left(\sum_{i = 1}^N (\tilde{D}_i(s) - \underbar{D}(s)) \overline{e}_i^2 \right)
    \left(\sum_{i = 1}^N (\tilde{D}_i(s) - \underbar{D}(s)) \overline\eta_i^2 \right)
    \\
    &\le
    4 a_e(s)
    \left( \sum_{i = 1}^N (\tilde{D}_i(s) - \underbar{D}(s)) \overline\eta_i^2 \right)
    \\
    \underbar{D}(s) &:= \min_{i = 1, \cdots d} \tilde{D}_i(s).
\end{align*}
Dropping the last term in \eqref{eq:wp-1}, we obtain
\begin{align*}
    \sum_{i,j = 1}^N 4 \tilde{\mu}_{ij}(e) \eta_i \eta_j
    &\ge
    \sum_{i,j = 1}^N \int_{\alpha_-}^{\alpha_+} 2 \partial_{e_i e_j}^2 a_e \eta_i \eta_j \sqrt{W_e(s)} ds
    \\
    &-
    \int_{\alpha_-}^{\alpha_+} W_e(s)^{-1/2} \dfrac{W_e(s)}{a_e(s)} \overline{a}(s; e,\eta)^2 ds
    \\
    &\ge
    4\sum_{i = 1}^N 
    \int_{\alpha_-}^{\alpha_+} 
    \left(
        \tilde{D}_i(s) \bar{\eta}_i^2
        - (\tilde{D}_i(s) - \underbar{D}(s)) \overline\eta_i^2
    \right) \sqrt{W_e(s)} ds
    \\
    &=4\sum_{i = 1}^N 
    \int_{\alpha_-}^{\alpha_+} \underbar{D}(s) \overline\eta_i^2 \sqrt{W_e(s)} ds,
\end{align*}
which leads to \eqref{eq:mot-ellip} since $\underbar{D}(s)$ is strictly positive in $[\alpha_-, \alpha_+]$. Thus, we obtain the following lemma for well-posedness of the interface $\Ga_t$ by using Theorem 2.1 of \cite{GG}.

\begin{lem}\label{lem:d0-WP}
    There exists a positive constant $T$ such that the solution $\Ga_t$ of \eqref{P0} exists uniquely in $[0,T]$ satisfying $\Ga_t \in C^{4 + \nu, 2 + \nu/2}$.
\end{lem}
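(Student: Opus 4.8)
\emph{Proof idea.} The plan is to read $(P^0)$ as a quasilinear parabolic evolution equation for hypersurfaces and to invoke the local existence theory for such geometric flows, Theorem 2.1 of \cite{GG}. Working in a tubular neighborhood of $\Gamma_0$, one represents a nearby hypersurface as a normal graph over $\Gamma_0$; the outward normal $n$, its tangential derivatives $\partial_{x_i}n_j$, and the normal velocity $V_n$ then become functions of the height function $\rho$ and of $\nabla\rho$, $\nabla^2\rho$, $\partial_t\rho$ depending smoothly on their arguments as long as $\|\rho\|_{C^1}$ is small, so that $(P^0)$ turns into a scalar equation $\partial_t\rho = \sum_{k,l}a_{kl}(y,\rho,\nabla\rho)\,\partial^2_{kl}\rho + b(y,\rho,\nabla\rho)$ on $\Gamma_0\times(0,T)$ with $\rho|_{t=0}=0$; equivalently one may keep the form \eqref{eq:d0-p} for the signed distance function $d_0$. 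What Theorem 2.1 of \cite{GG} requires is (a) uniform parabolicity of the principal part and (b) the appropriate Hölder regularity of the coefficients, and (a) has essentially been carried out already in the computation preceding the statement.

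Indeed, the principal symbol of the linearization at a point of $\Gamma_t$, evaluated on a cotangent vector $\xi$ tangent to $\Gamma_t$, is, up to a positive factor, $\sum_{i,j}\mu_{ij}(n)\xi_i\xi_j = \lambda(n)^{-1}\sum_{i,j}\tilde\mu_{ij}(n)\xi_i\xi_j$, and \eqref{eq:mot-ellip} shows this is bounded below by a positive constant, uniformly in $n\in\mathbb{S}^{N-1}$ and in $\xi$; since $\lambda(n)=\int_{\alpha_-}^{\alpha_+}\sqrt{W_n(s)}\,ds$ is also bounded above and below by positive constants, by \eqref{cond:D_pos} and the bistable structure \eqref{cond:f-bistable}, this gives strict parabolicity with constants independent of $t$ and of the point on the interface. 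For (b) one checks that $e\mapsto\mu_{ij}(e)$ is $C^{2+\nu}$ (in fact $C^\infty$) on $\mathbb{S}^{N-1}$: by the explicit formulas \eqref{eq:IM-1}, \eqref{eq:IM-2}, $\mu_{ij}(e)$ is built from $a_e(s)=e\cdot D(s)e$, which is polynomial in $e$ and, by \eqref{cond:D_pos}, bounded above and below uniformly in $s\in[\alpha_-,\alpha_+]$, and from $W_e(s)=-2\int_{\alpha_-}^s a_e f\,ds$, which is smooth in $(e,s)$, strictly positive on $(\alpha_-,\alpha_+)$, and, using $f(\alpha_\pm)=0$ and $f'(\alpha_\pm)<0$ from \eqref{cond:f-bistable}, vanishes exactly to first order at $\alpha_\pm$ with $\partial_s W_e(\alpha_\pm)\neq 0$ uniformly in $e$. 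Hence $\sqrt{W_e(s)}$ behaves like $|s-\alpha_\pm|^{1/2}$ and $\partial_{e_j}\bigl(a_e/\sqrt{W_e}\bigr)$ like $|s-\alpha_\pm|^{-1/2}$ near the endpoints, so the integrals in \eqref{eq:IM-1}, \eqref{eq:IM-2} and all their $e$-derivatives converge with bounds uniform in $e$, which yields the claimed regularity of $\mu_{ij}$ and therefore of the coefficients $a_{kl}$, $b$ as functions of $(y,\rho,\nabla\rho)$.

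With (a) and (b) in hand, Theorem 2.1 of \cite{GG} supplies $T>0$ and a unique solution $\rho\in C^{4+\nu,2+\nu/2}(\Gamma_0\times[0,T])$ with $\|\rho\|_{C^1}$ small — here $\Gamma_0\in C^{4+\nu}$ from \eqref{cond:gamma0_normal} fixes the initial regularity — and translating back through the graph representation produces the unique family $\Gamma_t$, $t\in[0,T]$, solving $(P^0)$ with $\Gamma_t\in C^{4+\nu,2+\nu/2}$, as asserted. The one genuinely technical point is the endpoint analysis in step (b): one must track the square-root degeneracy of $W_e$ at $\alpha_\pm$ carefully enough to see that each differentiation in $e$ preserves integrability and that the resulting bounds are uniform over $e\in\mathbb{S}^{N-1}$; the parabolicity has already been reduced to \eqref{eq:mot-ellip}, and the invocation of \cite{GG} is otherwise routine.
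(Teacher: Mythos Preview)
Your approach is exactly the paper's: reduce well-posedness to the nondegeneracy estimate \eqref{eq:mot-ellip} and invoke Theorem~2.1 of \cite{GG}. The paper in fact gives no further argument beyond this, so your added discussion of the regularity of $e\mapsto\mu_{ij}(e)$ is extra detail, not a different method.

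One computational slip in that extra detail: $W_e$ vanishes to \emph{second} order at $\alpha_\pm$, not first. Indeed $\partial_s W_e(s)=-2a_e(s)f(s)$ and $f(\alpha_\pm)=0$, while $\partial_s^2 W_e(\alpha_\pm)=-2a_e(\alpha_\pm)f'(\alpha_\pm)>0$ by \eqref{cond:f-bistable}; this is also the estimate $C_W^{-1}(s-\alpha_-)^2(\alpha_+-s)^2\le W_e(s)\le C_W(s-\alpha_-)^2(\alpha_+-s)^2$ used later in the proof of Lemma~\ref{lem:TW-est}. Consequently $\sqrt{W_e}\sim|s-\alpha_\pm|$ and $\partial_{e_j}(a_e/\sqrt{W_e})\sim|s-\alpha_\pm|^{-1}$ near the endpoints, and since $\partial_{e_i}W_e$ also vanishes to second order there, the integrand in \eqref{eq:IM-2} behaves like $|s-\alpha_\pm|$, so integrability (and smoothness in $e$) still holds --- in fact more comfortably than your stated $|s-\alpha_\pm|^{-1/2}$ would give. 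The error is harmless for the conclusion but worth correcting.
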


\begin{rem}\label{rem:d0-p}
    The solution $\ul$ of \eqref{eqn:TW_lin,formal} used in the formal expansion is not well-defined. During the derivation of \eqref{eq:d-p} the derivative of the signed distance function $d_0$ was considered not only being a coefficient term, but also independent to the variable $z$. This may be true for the terms $\partial_t d_0$ and $\pvi d$ near the interface but not for the terms $\pvi \pvj d_0$, which leads to the fact that the solvability condition may fail away from the interface; see Proposition 2.2 of \cite{GG}. In the later section we will reintroduce the function $\ul$ satisfying the solvability condition \eqref{lem:TWlin_sol1} which will be important in the  proof of the main theorem.
\end{rem}


\section{Generation of the interface}\label{sec:generation}

In this section we prove the generation of the interface. Since we assumed that $\Vert u_0 \Vert_{C^2(\Om)}$ is bounded, studying the equation
\begin{align*}
    u_t 
    = 
    \dfrac{1}{\e^2} f(u),
\end{align*}
helps us to understand behavior of the equation \eqref{eq:pe} at least within a small time.
To be precise, as Theorem \ref{thm:gen} depicts, the generation occurs within the time scale of order $\o(\e^2 | \ln \e |)$ creating the steep transition layer which divides the steady states $\a_\pm$.

Under such intuition, we first consider the following ordinary differential equation
\begin{align}\label{eq:ODE_Y}
\begin{cases}
    Y_\tau(\tau;\xi)
    =
    f(Y)
    \\
    Y(0;\xi)
    =
    \xi.
\end{cases}
\end{align}
Recall $c_0$ in \eqref{cond:C0} and
\begin{align*}
     \eta_0 
     = \min(\ap - \a, \a - \am)
     ,~
     \nu = f'(\a)
\end{align*}
in Theorem \ref{thm:gen} and \eqref{cond:f-bistable}. We deduce the following result from \cite{AHM2008}.
\begin{lem}\label{lem:gen}
	Let $\eta \in (0, \eta_0)$ be arbitrary. Then, there exists a positive constant $C_Y 
	= C_Y(\eta)$ such that the following holds:
\begin{enumerate}[label =(\roman*)]
\item For all $\tau > 0$ and all $\xi \in (-2c_0, 2c_0)$,
\begin{align}\label{lem:gen_0}
	0 <
    Y_\xi(\tau,\xi)
	\leq 
	C_Y e^{\nu \tau}.
\end{align}

\item For all $\tau > 0$ and all $\xi \in (-2c_0, 2c_0)$,
\begin{align}\label{lem:gen_1}
    \left|
	\frac{Y_{\xi \xi}(\tau, \xi)}{Y_\xi(\tau, \xi)} 
	\right|
	\leq C_Y (e^{\nu \tau} - 1).
\end{align}

\item There exists a positive constant $\e_0$ such that, for all $\e \in (0, \e_0)$, we have
\begin{enumerate}
\item for all $\xi \in (-2c_0, 2c_0)$
\begin{align}\label{lem:gen_2}
	\am - \eta
	\leq
	Y(\nu^{-1} |\ln \e|, \xi)
	\leq
	\ap + \eta,
\end{align}

\item if $\xi \geq \a + C_Y \e$, then
\begin{align}\label{lem:gen_3}
	Y(\nu^{-1} |\ln \e|, \xi) \geq \ap - \eta,
\end{align}

\item if $\xi \leq \a - C_Y \e$, then
\begin{align*}
	Y(\nu^{-1} |\ln \e|, \xi) \leq \am + \eta.
\end{align*}
\end{enumerate}
\end{enumerate}

\end{lem}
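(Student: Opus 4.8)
\medskip
\noindent\emph{Proof sketch (the statement can also be quoted directly from \cite{AHM2008}).}
The plan is to reduce every assertion to the first- and second-order variational equations of the scalar ODE \eqref{eq:ODE_Y}, which can be integrated explicitly. Writing $Y=Y(\tau;\xi)$ and differentiating in $\xi$, one has $\partial_\tau Y_\xi=f'(Y)Y_\xi$ with $Y_\xi(0;\xi)=1$ and $\partial_\tau Y_{\xi\xi}=f''(Y)Y_\xi^2+f'(Y)Y_{\xi\xi}$ with $Y_{\xi\xi}(0;\xi)=0$, hence
\[
  Y_\xi(\tau;\xi)=\exp\Big(\int_0^\tau f'(Y(s;\xi))\,ds\Big)>0,\qquad
  \frac{Y_{\xi\xi}(\tau;\xi)}{Y_\xi(\tau;\xi)}=\int_0^\tau f''(Y(s;\xi))\,Y_\xi(s;\xi)\,ds.
\]
I would first record the elementary phase-line facts coming from \eqref{cond:f-bistable}: since $f$ has exactly the three zeros $\am<\a<\ap$ with $f'(\apm)<0<f'(\a)=\nu$, for each $\xi\in(-2c_0,2c_0)$ the trajectory $\tau\mapsto Y(\tau;\xi)$ is monotone, stays in the fixed compact interval $I:=[\min\{\am,-2c_0\},\max\{\ap,2c_0\}]$, and converges to the neighbouring stable zero $\am$ or $\ap$ (being constant if $\xi\in\{\am,\a,\ap\}$). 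All constants below depend only on $f$, $c_0$ and $\eta$, and I would take $C_Y$ at the very end to be the maximum of finitely many of them; this is legitimate since enlarging $C_Y$ only strengthens (i), (ii) and the hypotheses of (iii).

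For (i) and (ii): along a non-constant trajectory change variables by $ds=dy/f(y)$, which turns $\int_0^\tau(f'(Y(s;\xi))-\nu)\,ds$ into $\int_\xi^{Y(\tau;\xi)}\frac{f'(y)-\nu}{f(y)}\,dy$. The integrand has a removable singularity at $y=\a$ (with value $f''(\a)/\nu$) and blows up at $y=\apm$ with exactly the sign that makes $\int_0^\tau(f'(Y)-\nu)\,ds$ decrease as $Y$ approaches the relevant stable equilibrium; hence this quantity is bounded above by a constant $M$ uniform in $\tau>0$ and $\xi\in(-2c_0,2c_0)$, which gives $Y_\xi(\tau;\xi)\le e^{M}e^{\nu\tau}$, i.e. \eqref{lem:gen_0}. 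Substituting this and $|f''(Y)|\le\|f\|_{C^2(I)}$ into the formula for $Y_{\xi\xi}/Y_\xi$ and integrating $e^{\nu s}$ immediately yields \eqref{lem:gen_1}.

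For (iii), set $\tau_\e:=\nu^{-1}|\ln\e|\to\infty$ as $\e\downarrow0$. Part (a) is trivial when $\xi\in[\am,\ap]$; when $\xi<\am$ (resp. $\xi>\ap$) the trajectory increases to $\am$ (resp. decreases to $\ap$), so I only need that it has passed $\am-\eta$ (resp. $\ap+\eta$) by time $\tau_\e$, and since $|f|\ge c(\eta)>0$ on the compact set $[-2c_0,\am-\eta]$ (resp. $[\ap+\eta,2c_0]$) the traversal time is bounded by some $T_0(\eta)<\infty$; choosing $\e_0$ with $\tau_{\e_0}>T_0(\eta)$ settles (a). For (b) I would use the exact relation $\tau=\int_\xi^{Y(\tau;\xi)}dy/f(y)$ on $(\a,\ap)$: by monotonicity $Y(\tau_\e;\xi)\ge\ap-\eta$ is equivalent to $\tau_\e\ge\int_\xi^{\ap-\eta}dy/f(y)$ (nothing to prove if $\xi\ge\ap-\eta$). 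Splitting the integral at $\a+\delta$ and using the sharp expansion $\frac1{f(y)}=\frac1{\nu(y-\a)}+O(1)$ on $(\a,\a+\delta]$ — valid for small fixed $\delta=\delta(\eta)$ because $f(\a)=0$, $f'(\a)=\nu$ and $f\in C^2$ — gives, using $\xi-\a\ge C_Y\e$,
\[
  \int_\xi^{\ap-\eta}\frac{dy}{f(y)}\ \le\ \frac1\nu\ln\frac{\delta}{\xi-\a}+C(\delta,\eta)\ \le\ \frac1\nu|\ln\e|-\frac1\nu\ln C_Y+\frac1\nu\ln\delta+C(\delta,\eta),
\]
and the right-hand side is $\le\tau_\e$ as soon as $C_Y$ is taken large enough (depending only on $\delta,\eta,f$), which is possible since that condition does not itself involve $C_Y$. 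Part (c) is the mirror image.

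The step I expect to be delicate is (iii)(b)--(c): the logarithmic time estimate near $\a$ must have coefficient exactly $1/\nu$ so that it is beaten by $\tau_\e=\nu^{-1}|\ln\e|$, which forces using the precise local behaviour $f(y)\sim\nu(y-\a)$ rather than a cruder two-sided bound; one also has to check that the $C_Y$ required there is compatible with (larger than) the constants produced in (i)--(ii). Everything else is bookkeeping, and the statement can alternatively be deduced directly from \cite{AHM2008}.
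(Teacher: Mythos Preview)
Your sketch is correct and follows the standard variational-equation/phase-line argument; the paper itself gives no proof here but simply defers to \cite{AHM2008}, whose method is exactly the one you outline. Your note that the statement can be quoted directly from \cite{AHM2008} is therefore already the paper's entire ``proof''.
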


We also give a comparison principle of \eqref{eq:pe}, which can be derived by using the maximum principle of semilinear parabolic differential equation; see \cite{Arena1972}.

\begin{lem}\label{lem:comparison}
Let $u^+$ be the functions satisfying 
\begin{align*}
    \begin{cases}
        \ml(u^+)
        \ge 0
        &\text{in}~ \Omega \times (0,T),
        \\
        \dfrac{\partial u^+}{\partial \nu} = 0
        &\text{on}~ \partial \Omega \times (0,T),
        \\
        u^+(x,0) \ge u_0(x)
        &x \in \Omega.
    \end{cases}
\end{align*}
And let $u^-$ be the function satisfying the opposite inequalities of the above equation. Then we have
\begin{align*}
    u^+ \ge u^- \text{in}~ \Omega \times (0,T).
\end{align*}
\end{lem}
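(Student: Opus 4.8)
The plan is to deduce the comparison from the \emph{linear} parabolic maximum principle applied to the difference $w := u^+ - u^-$, freezing the $u$-dependence of $D$ and of $f$ by the integral mean‑value trick.

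Subtracting the two differential inequalities gives $\ml(u^+) - \ml(u^-) \ge 0$ in $\Om\times(0,T)$. Using
$D_{ij}(u^+)\pvj u^+ - D_{ij}(u^-)\pvj u^- = D_{ij}(u^+)\pvj w + \tilde D_{ij}\,(\pvj u^-)\,w$ with $\tilde D_{ij}(x,t) := \int_0^1 D_{ij}'\big(\th u^+ + (1-\th)u^-\big)\,d\th$, and $f(u^+) - f(u^-) = c(x,t)\,w$ with $c(x,t) := \int_0^1 f'\big(\th u^+ + (1-\th)u^-\big)\,d\th$, one obtains that $w$ satisfies
\[
\partial_t w - \sum_{i,j=1}^N \pvi\big(D_{ij}(u^+)\pvj w\big) - \sum_{i=1}^N \pvi\Big(\big(\textstyle\sum_{j}\tilde D_{ij}\pvj u^-\big)\, w\Big) - \tfrac{1}{\e^2}c\,w \ \ge\ 0
\]
in $\Om\times(0,T)$, with $w(\cdot,0)\ge 0$ (since $u^+(\cdot,0)\ge u_0\ge u^-(\cdot,0)$) and the homogeneous Neumann condition on $\partial\Om\times(0,T)$. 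By \eqref{cond:D_pos} the leading matrix $(D_{ij}(u^+))$ is uniformly elliptic and bounded on $\overline{\Om}\times[0,T]$, and — provided $u^\pm$ and $\na u^-$ are bounded there, which holds for every sub- and super-solution built in Sections \ref{sec:generation}--\ref{sec:propagation} because these are explicit smooth functions — the drift coefficients and $\e^{-2}c$ are bounded as well, using $\|D\|_{C^3}\le C_D$ and $f\in C^2(\R)$.

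It then remains to invoke the maximum principle for this linear second-order parabolic inequality with bounded coefficients and homogeneous Neumann data; see \cite{Arena1972}. Concretely, set $v := e^{-Kt}w$ with $K := \e^{-2}\|c\|_{L^\infty} + 1$, so that the zeroth-order coefficient becomes $\e^{-2}c - K \le -1$. If $v$ had a negative minimum over $\overline{\Om}\times[0,T]$, it would be attained at some $(x_0,t_0)$ with $t_0>0$; at an interior point the parabolic relations $\partial_t v\le 0$, $\na v = 0$, $D^2 v\ge 0$ contradict the differential inequality because $(\e^{-2}c-K)v>0$ there, while at a boundary point Hopf's boundary-point lemma (valid since $\Om$ is smooth) would force $\partial v/\partial\nu < 0$, contradicting $\partial v/\partial\nu = 0$. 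Hence $v\ge 0$, i.e.\ $u^+\ge u^-$ on $\overline{\Om}\times[0,T]$. (Equivalently, testing the inequality for $v$ against $v_-:=\max\{-v,0\}$, integrating over $\Om$ so the boundary term drops, absorbing the drift term by ellipticity and Young's inequality, and using $\e^{-2}c-K\le 0$, one gets $\tfrac{d}{dt}\|v_-(\cdot,t)\|_{L^2(\Om)}^2 \le C\|v_-(\cdot,t)\|_{L^2(\Om)}^2$ with $\|v_-(\cdot,0)\|_{L^2(\Om)}=0$, whence $v_-\equiv 0$ by Gronwall; this version needs only $\na u^\pm\in L^\infty$.)

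The only genuinely nontrivial point is the quasilinear structure — the diffusion matrix depends on the unknown — but this is dissolved by the integral mean-value representation above, legitimate precisely because $D$ is $C^1$ with bounded derivatives; the rest is the standard linear parabolic maximum principle. The only care needed is the regularity of $u^\pm$ (boundedness of $u^\pm$ and of $\na u^\pm$), which is always available in the sequel since our barriers are explicit.
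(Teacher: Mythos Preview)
Your proof is correct and aligns with the paper's approach: the paper does not give a proof at all but merely states that the lemma follows from the maximum principle for semilinear parabolic inequalities and cites \cite{Arena1972}. You have supplied the standard linearization-by-mean-value argument underlying that citation; the only minor omission is that expanding the divergence form to apply the pointwise argument also contributes a $(\operatorname{div} b)\,v$ term to the zeroth order (with $b_i=\sum_j\tilde D_{ij}\,\pvj u^-$), so one really needs $D^2 u^-$ bounded rather than just $\nabla u^-$, but this holds for all the explicit barriers built in Sections~\ref{sec:generation}--\ref{sec:propagation}.
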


With the help of these lemmas we now prove Theorem \ref{thm:gen}.

\begin{proof}[Proof of Theorem \ref{thm:gen}]
 We prove Theorem \ref{thm:gen} by constructing sub- and super-solutions.
\begin{align*}
    w^\pm(x,t)
    =
    Y
    \left(
        \dfrac{t}{\e^2}; u_0(x) \pm \e^2 P(t)
    \right)
    ,~
    P(t) = C_g \left( e^{\nu t/\e^2} - 1
    \right),
\end{align*}
where $C_g$ is a positive constant which will be defined later.

Here we show $w^+$ is a super-solution; one can show $w^-$ is a sub-solution in a similar way. And since  $u_0(x) \le w^+(x,0),~ x \in \Om$, we only need to prove $\ml w^+ \ge 0$. Direct computation gives
\begin{align*}
    w^+_t
    =
    \dfrac{Y_\tau}{\e^2}
    +
    \e^2 P'(t) Y_\xi
    ,~~
    \pvi w^+
    =
    Y_\xi \pvi u_0
    ,~~
    \pvi \pvj w^+
    =
    Y_{\xi\xi} \pvi u_0 \pvj u_0
    +
    Y_{\xi} \pvi \pvj u_0.
\end{align*}
Thus  by using \eqref{eq:ODE_Y} and Lemma \ref{lem:gen} we obtain
\begin{align*}
    \ml(w^+)
    &=
    \dfrac{Y_\tau }{\e^2} 
    + \e^2P'(t) Y_\xi
    - \D(Y) \pvi \pvj w^+
    - \D'(Y) \pvi w^+ \pvj w^+
    - \dfrac{f(Y)}{\e^2}
    \\
    &=
    Y_\xi
    \left(
        \e^2P'(t) 
        -
        \D(Y) 
        \left(
            \pvi u_0 \pvj u_0 \dfrac{Y_{\xi\xi}}{Y_\xi} 
            + \pvi \pvj u_0
        \right)
        -
        \D'(Y) \pvi u_0 \pvj u_0  Y_\xi
    \right)
    \\
    &\ge
    Y_\xi
    \left(
        C_g \nu e^{\nu t/\e^2} 
        - 
        C_D (2 c_0^2 C_Y e^{\nu t/\e^2} + c_0)  
    \right),
\end{align*}
where the inequality holds by \eqref{lem:gen_0} and \eqref{lem:gen_1}. Since $C_g$ is arbitrary, by  choosing $C_g$ large enough $w^+$ is a super-solution. 

We now prove the result of Theorem \ref{thm:gen} with $w^\pm$. Note that, choosing $0 < \e_0$ sufficiently small we have 
\begin{align*}
    0 \le P(t)  \le P(t^\e) = C_g(\e^{-1} -1) < c_0\e^{-2},
\end{align*}
which implies that
\begin{align*}
    u_0 \pm \e^2 P(t) \in (- 2c_0, 2 c_0).
\end{align*}
Hence by \eqref{lem:gen_2} we obtain \eqref{thm:gen_1}.

To prove \eqref{thm:gen_2} we use $w^-$. For this, we choose $M_0$ satisfying $M_0 \ge C_g + C_Y$. Then for $x \in \Om$ satisfying $u_0(x) \ge \a + M_0 \e$ we have
\begin{align*}
    u_0(x) - \e^2 P(t^\e)
    \ge
    \a + M_0 \e - C_g \e
    \ge
    \a + C_Y \e,
\end{align*}
thus by \eqref{lem:gen_3} we have \eqref{thm:gen_2}. By similar method we can also prove \eqref{thm:gen_3} using $w^+$. 

\end{proof}

\section{Motion of the interface} \label{sec:propagation}

In the previous section, we proved that the solution $u^\e$ generates a steep transition layer within a short time. In fact, combining the generation result with \eqref{cond:gamma0_normal} yields that the width of the steep transition layer is $\o(\e)$ which allows us to estimate $u^\e(x,t^\e)$ close to the steady states $\apm$ with $\eta_g$ error. For next step, we reduce this $\eta_g$ error in a small scale within a small time and show that the propagation of the interface is approximated by the motion equation \eqref{P0}.

In order to show this assertion, we construct a pair of suitable sub- and super- solutions $u^\pm(x,t)$ for the problem \eqref{eq:pe}. Following the intuition from Section \ref{sec:formal_asympt}, we intend to find a pair of sub- and super-solutions similar to the formal asymptotic expansion up to order $\e$:
\begin{align*}
    u^\e(x,t)
    \simeq
    \us\left( \frac{d(x,t)}{\e}; \nabla d \right)
    +
    \e \ul\left( \frac{d(x,t)}{\e},x,t; \nabla d \right),
\end{align*}
and satisfies 
\begin{align*}
    u^-(x,t^\e)
    \le
    u^\e(x,t^\e)
    \le
    u^+(x,t^\e),
\end{align*}
where $\us, \ul$ are solutions introduced in Section \ref{sec:formal_asympt}; recall $t^\e = \nu^{-1}\e^2 |\ln \e|$ given in Theorem \ref{thm:gen}. Then, by the comparison principle we obtain 
\begin{align*}
    u^-(x,t)
    \le
    u^\e(x,t)
    \le
    u^+(x,t)
\end{align*}
for $t^\e \le t \le T$.

To construct $u^\pm$ modifying the asymptotic expansion, we need some preparation related to the signed distance function $d_0$ and the linearized solution $U_1$. We explain these in the upcoming sections.

\subsection{Modified signed distance function}\label{sec:ctoff-d}

In this section we cut-off the signed distance function $d_0$ near the interface $\Ga_t$, for our analysis later. By Lemma \ref{lem:d0-WP} the signed distance function is well-defined. Moreover, it follows from Proposition 2.2 of \cite{GG} that there exists a positive constant $\tilde{d}_0$ such that $d_0(x,t)$ is smooth in the tublar neighborhood $\{ (x,t) \in \Om \times [0,T],~ |d_0(x,t)| \le 4 \tilde{d}_0 \}$ of $\Ga_t, t \in [0,T]$. Moreover, by choosing $\tilde{d}_0$ small enough we can also assume that 
\begin{align*}
    dist(\Ga_t, \partial \Om) \ge 4 \tilde{d}_0
    ~~
    \text{for all}
    ~t \in [0,T].
\end{align*}
Next, let $\rho(s)$ be a smooth increasing function on $\R$ such that 
\begin{align*}
    \rho(s)
    =
    \begin{cases}
        s
        &
        \text{if}~ |s| \le 2\tilde{d}_0,
        \\
        - 3 \tilde{d}_0
        &
        \text{if}~ s \le - 3 \tilde{d}_0,
        \\
        3 \tilde{d}_0
        &
        \text{if}~ s \ge 3 \tilde{d}_0.
    \end{cases}
\end{align*}
Then, we define the cut-off signed distance function $d$ by
\begin{align*}
    d(x,t) = \rho \left( d_0(x,t) \right).
\end{align*}
Note that, since $d_0 = d$ near $\Ga_t$ and constant away from $\Ga_t$ we have 
\begin{align*}
    |\nabla d| = 1 ~\text{in} ~\{ (x,t) \in \Om \times [0,T],~ |d_0| \le 2 \tilde{d}_0  \},
    \\
    |\nabla d| = 0 ~\text{in} ~\{ (x,t) \in \Om \times [0,T],~ |d_0| \ge 3 \tilde{d}_0  \}.
\end{align*}    
In addition, the equation \eqref{eq:d0-p} also holds for $d$ on the interface $\Ga_t$ as well, thus satisfying 
\begin{align} \label{eq:d-p}
    \partial_t d 
    = \mij(\nabla d)  \pvi \pvj d
    ~\text{on}
    ~\Ga_t,
\end{align}
where we omitted the summation $\sum_{i,j = 1}^N$ and the coefficient $\mij$ is a function on $\mathbb{S}^{N - 1}$. We also give a lemma that will be used in the proof later.

\begin{lem}\label{lem:d}
    There exists a positive constant $C_d$ such that 
    \begin{enumerate}[label =(\roman*)]
        \item $\Vert d \Vert_{C^{4+\nu,2+\nu/2}(\Om \times [0,T])} \le C_d$,
        
        \item 
        $\left| \partial_t d - \sum_{i,j = 1}^N \mij(\nabla d)  \pvi \pvj d \right| \le C_d |d|$ in $\Om \times [0,T]$.
        
    \end{enumerate}
\end{lem}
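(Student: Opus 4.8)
The plan is to derive both items directly from the well-posedness result Lemma \ref{lem:d0-WP} together with the explicit cut-off construction. For item (i), recall that by Lemma \ref{lem:d0-WP} the interface $\Ga_t$ lies in $C^{4+\nu,2+\nu/2}$, and by Proposition 2.2 of \cite{GG} the signed distance function $d_0$ is smooth (indeed of class $C^{4+\nu,2+\nu/2}$) on the tubular neighborhood $\{|d_0| \le 4\tilde d_0\}$; standard results on signed distance functions then give a uniform bound $\Vert d_0 \Vert_{C^{4+\nu,2+\nu/2}} \le C$ on this neighborhood. Since $d = \rho(d_0)$ with $\rho$ a fixed smooth function which is the identity on $\{|s| \le 2\tilde d_0\}$ and locally constant on $\{|s| \ge 3\tilde d_0\}$, the chain rule and the product rule express all space-time derivatives of $d$ up to the relevant order as polynomials in derivatives of $\rho$ (evaluated at $d_0$) and derivatives of $d_0$. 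On $\{|d_0| \le 4\tilde d_0\}$ these are controlled by $\Vert \rho \Vert_{C^5}$ and $\Vert d_0 \Vert_{C^{4+\nu,2+\nu/2}}$; away from that neighborhood $d$ is constant ($\pm 3\tilde d_0$) so all derivatives vanish. Taking $C_d$ to absorb these constants gives (i), noting the Hölder seminorms of order $2+\nu/2$ in $t$ and $4+\nu$ in $x$ are handled the same way since $\rho$ is smooth with bounded derivatives.

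For item (ii), the starting point is \eqref{eq:d-p}: the function $g(x,t) := \partial_t d - \sum_{i,j} \mu_{ij}(\nabla d)\,\partial_{x_i}\partial_{x_j} d$ vanishes on $\Ga_t$, i.e.\ on $\{d_0 = 0\}$. The idea is to Taylor-expand $g$ in the normal direction off $\Ga_t$. Since $g$ is $C^1$ up to the relevant order in the tubular neighborhood (it is built from $C^{2+\nu/2,4+\nu}$-controlled pieces of $d$, and $\mu_{ij}$ is smooth on $\mathbb S^{N-1}$ because $a_e$, $W_e$ and hence the integrals defining $\mu_{ij}$ depend smoothly on $e$ with $W_e > 0$ bounded away from $0$ by \eqref{cond:D_pos}), and since $g \equiv 0$ on $\{d_0 = 0\}$, the fundamental theorem of calculus along the normal ray gives $|g(x,t)| \le (\sup |\nabla g|)\, |d_0(x,t)| = (\sup|\nabla g|)\,|d(x,t)|$ on $\{|d_0| \le 2\tilde d_0\}$, using $d = d_0$ there. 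Away from $\Ga_t$, say on $\{|d_0| \ge 2\tilde d_0\}$, one argues separately: on $\{2\tilde d_0 \le |d_0| \le 3\tilde d_0\}$ both $g$ and $|d|$ are bounded above and below by positive constants, so the inequality holds after enlarging $C_d$; on $\{|d_0| \ge 3\tilde d_0\}$ one has $\nabla d = 0$ and $\partial_t d = 0$ (as $d$ is constant there), so $g \equiv 0$ and the estimate is trivial. Combining the three regions and taking $C_d$ to be the maximum of the constants produced yields (ii).

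The main obstacle is making rigorous the passage from "$g$ vanishes on $\Ga_t$" to the linear-in-$d$ bound, which requires knowing $g$ is Lipschitz (in the normal variable) near $\Ga_t$ with a uniform constant. This in turn needs: the $C^{4+\nu,2+\nu/2}$ regularity of $d$ from item (i) (so $\partial_t d$ and $\partial^2_{x_ix_j}d$ are at least $C^{1}$ in space), and the smoothness of $e \mapsto \mu_{ij}(e)$ on $\mathbb S^{N-1}$, which follows from differentiating under the integral sign in the formulas \eqref{eq:IM-1}, \eqref{eq:IM-2} for $\la(e)\mu_{ij}(e)$, using that $W_e(s) > 0$ for $s \in (\am,\ap)$ with integrable square-root singularities only at the endpoints (so the integrals and their $e$-derivatives converge uniformly for $e$ in a neighborhood of any point of $\mathbb S^{N-1}$), and that $\la(e)$ is bounded below by a positive constant. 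Once these regularity facts are in hand, the Taylor expansion / fundamental-theorem-of-calculus step and the gluing across the three regions of the cut-off are routine. I would also remark that the same argument shows $g$ is in fact $C^\infty$-controlled in the $e$-variable, which is more than needed here but clarifies why no loss of derivatives occurs in $\mu_{ij}(\nabla d)$.
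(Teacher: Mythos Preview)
Your proposal is correct and follows essentially the same approach as the paper: item (i) is obtained from Proposition 2.2 of \cite{GG} (the paper states this in one line, while you spell out the chain rule for $d=\rho(d_0)$), and item (ii) is deduced from the Lipschitz continuity of $g(x,t)=\partial_t d-\sum_{i,j}\mu_{ij}(\nabla d)\,\partial_{x_i}\partial_{x_j}d$ together with its vanishing on $\{d=0\}$. The paper compresses the argument for (ii) into a single sentence (``$d_t,\partial_{x_i}d,\partial_{x_i}\partial_{x_j}d$ and $\mu_{ij}$ are all Lipschitz, and $g=0$ on $\{d=0\}$''), whereas you additionally make explicit the region-by-region gluing across the cut-off and the smoothness of $e\mapsto\mu_{ij}(e)$; this extra detail is helpful but not a different method.
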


\begin{proof}

The result $(i)$ is a direct consequence of Proposition 2.2 of \cite{GG}. And this result implies that the terms $d_t, \pvi d, \pvi \pvj d$ and $\mij$ are all Lipschitz continuous. Thus, the result $(ii)$ holds, since by \eqref{eq:d0-p} we have 
\begin{align*}
    \partial_t d - \sum_{i,j = 1}^N \mij(\nabla d)  \pvi \pvj d = 0
\end{align*}
on $\{ (x,t) \in \Om \times [0,T], d(x,t) = 0 \} $.
    
\end{proof}

\subsection{Estimates of $\us$ and linearized solution  $\ul$ }
 
In this section we give estimates related to $\us$ and $\ul$. We first give estimates on the solution $\us$ of  \eqref{eq:TW-e}.
\begin{lem}\label{lem:TW-est}
    There exist positive constants $C_0, {\lambda}_0$ such that 
    \begin{align}\label{lem:TW-est1}
        \begin{cases}
        0 < \ap - \us \le C_0 e^{- \lambda_0 |z|}
        ,~
        &\text{for}~ z \ge 0,
        \\
        0 < \us - \am \le C_0 e^{-\lambda_0 |z|}
        ,~
        &\text{for}~ z \le 0,
        \end{cases}
    \end{align}
    and
    \begin{align*}
        0 < \usz \le C_0 e^{- \lambda_0 |z|}
        ,~
        \left| \partial_z^{k_z} \pei^{k_i} \us \right| \le C_0 e^{- \lambda_0 |z|}
    \end{align*}
    for all $0 \le i \le N, (z;e) \in \R \times \mathbb{S}^{N - 1}$, where $k_z, k_i \in \mathbb{Z}^+, \, k_z + \sum_{i = 1}^N k_i \le 2$.
\end{lem}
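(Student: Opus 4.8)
The plan is to treat \eqref{eq:TW-e} as an autonomous ODE in $z$ for each fixed parameter $e \in \mathbb{S}^{N-1}$ and extract exponential decay from the linearization at the endpoints $\apm$, then to bootstrap the decay of $\us$ itself to its derivatives — first in $z$, then in the parameters $e_i$. First I would rewrite \eqref{eq:TW-e} in the first-order form using the energy identity $A_e(\us)_z = \sqrt{W_e(\us)}$ from \eqref{eq:tw_1-2}: since $a_e(\us)>0$ by \eqref{cond:D_pos}, this gives $\usz = \sqrt{W_e(\us)}/a_e(\us) > 0$, so $\us$ is strictly increasing and the bounds $\am<\us<\ap$ follow immediately, as does positivity of $\usz$. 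For the exponential rate, note $W_e(\apm)=0$ (the first by \eqref{cond:Df_equi}, the second by definition) and $W_e'(\apm) = -2a_e(\apm)f(\apm)=0$, while $W_e''(\apm) = -2a_e(\apm)f'(\apm) > 0$ by \eqref{cond:f-bistable}; hence near $s=\ap$ one has $W_e(s)\asymp (\ap-s)^2$ uniformly in $e$ (using the uniform bounds \eqref{cond:D_pos} on $a_e$ and its derivatives, and compactness of $\mathbb{S}^{N-1}$), so $\usz \asymp (\ap - \us)$ there, which by Gronwall gives $0<\ap-\us \le C_0 e^{-\lambda_0 z}$ for $z\ge 0$; the case $z\le 0$ at $\am$ is symmetric. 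This also yields $0<\usz \le C_0 e^{-\lambda_0|z|}$ directly from $\usz = \sqrt{W_e(\us)}/a_e(\us)$ and the quadratic bound on $W_e$.

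Next I would handle the higher $z$-derivatives by differentiating the ODE. Differentiating $(a_e(\us)\usz)_z + f(\us) = 0$ once gives a linear equation for $\uszz$ with coefficients that are smooth bounded functions of $\us$ (and its lower derivatives), and since the right-hand side already decays like $e^{-\lambda_0|z|}$, so does $\uszz$; one more differentiation covers $\partial_z^3$ if needed, but the statement only requires total order $\le 2$. For the $e_i$-derivatives, I would differentiate \eqref{eq:TW-e} with respect to $e_j$, obtaining precisely the linearized equation $(a_e(\us)\use)_{zz} + f'(\us)\use = -(\partial_{e_j}(a_e)(\us)\usz)_z$ already displayed in Section \ref{sec:formal_asympt}, whose explicit solution formula \eqref{eq:TW_lin_explicit} expresses $\use$ as $\usz$ times an integral of a quantity bounded uniformly in $(z;e)$; combining the exponential decay of $\usz$ with the boundedness (indeed decay, since $\partial_{e_j}(a_e)$ is bounded and $f(\us)$ decays) of the integrand gives $|\use|\le C_0 e^{-\lambda_0|z|}$, possibly after shrinking $\lambda_0$. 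Mixed derivatives $\partial_z\partial_{e_j}\us$ and second $e$-derivatives are then obtained by further differentiating this linear equation and re-applying the solution formula together with the already-established decay estimates — the structure is the same each time: a linear second-order ODE whose forcing term decays exponentially, solved by a formula whose kernel behaves like $\usz$.

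The main obstacle is securing \emph{uniformity in $e$} of the rate $\lambda_0$ and the constant $C_0$. The linearization eigenvalue at the endpoints, and hence the decay rate, depends on $a_e(\apm)f'(\apm)$, which varies with $e$; one must check that it stays bounded away from $0$ — this follows from $c_D \le a_e(s) \le C_D$ in \eqref{cond:D_pos} and $f'(\apm)<0$ in \eqref{cond:f-bistable}, but needs to be stated carefully, and similarly the $C^3$ bound on $\D$ in \eqref{cond:D_pos} together with compactness of $\mathbb{S}^{N-1}$ must be invoked to control all the $e$-derivatives of $a_e$ and $W_e$ uniformly. A secondary point requiring a little care is that each time $\lambda_0$ may need to be decreased slightly (to absorb polynomial-in-$z$ prefactors coming from iterated integration), so one should fix the final $\lambda_0$ at the end as the smallest rate appearing among the finitely many derivatives of total order $\le 2$. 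Everything else is a routine Gronwall/variation-of-parameters argument.
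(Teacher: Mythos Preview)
Your overall strategy matches the paper's: establish exponential decay of $\us$ and $\usz$ from the first-order identity $\usz = \sqrt{W_e(\us)}/a_e(\us)$ together with the quadratic vanishing of $W_e$ at $\apm$, then treat the $e$-derivatives via the explicit formula \eqref{eq:TW_lin_explicit}. (The paper reaches the $z$-estimates by the substitution $V_0 = A_e(\us)$ and an appeal to \cite{AHM2008}, but your direct Gronwall argument is equivalent.)

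There is, however, one incorrect step. You assert that \eqref{eq:TW_lin_explicit} writes $\use$ as $\usz$ times an integral of a quantity bounded uniformly in $(z;e)$, and your parenthetical justification invokes $f(\us)$, which does not appear in that integrand. In fact the integrand $\partial_{e_j}\bigl(a_e(s)/\sqrt{W_e(s)}\bigr)$ blows up like $(s-\am)^{-1}$ and $(\ap-s)^{-1}$ at the endpoints, so the integral $\int_\a^{\us}$ is \emph{not} bounded: it diverges logarithmically as $\us\to\apm$, i.e.\ grows like $|\ln(\ap-\us)|+|\ln(\us-\am)|\sim \lambda_0|z|$. The paper isolates exactly this point: from the two-sided bound $C_W^{-1}(s-\am)^2(\ap-s)^2 \le W_e(s) \le C_W(s-\am)^2(\ap-s)^2$ it extracts a logarithmic bound on the integral (and a $|\ln|^2$ bound for the second $e$-derivatives), which combined with $\usz \le c_W(\us-\am)(\ap-\us)$ yields
\[
|\use| \le C\,(\us-\am)(\ap-\us)\bigl(|\ln(\us-\am)|+|\ln(\ap-\us)|\bigr),
\]
and only then is the exponential decay of $\ap-\us$ and $\us-\am$ invoked. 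Your hedge ``possibly after shrinking $\lambda_0$'' is the right instinct --- a polynomial-in-$|z|$ prefactor is absorbed precisely that way --- but the argument you give for it should be replaced by this logarithmic estimate.
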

\begin{proof}
We first prove the result for a fixed $e \in \mathbb{S}^{N - 1}$ then we can find the desired result since $\us, \usz, \uszz$ are continuous in $e$ and $\mathbb{S}^{N - 1}$ is compact. Let $V_0 := A_e(\us)$, where $A'_e(s) = a_e(s) > 0$ by \eqref{cond:D_pos}. Then from \eqref{eq:TW-e} we obtain
\begin{align*}
    \begin{cases}
        V_{0zz} + g(V_0) = 0
        \\
        V_0(\pm\infty) = \apm'
        ,~
        V_0(0) = \a'
    \end{cases}
\end{align*}
where $g(s) = f (A_e^{-1}(s)),~ \apm' = A_e(\apm),~ \a' = A_e(\a)$. Then by Lemma 2.1 of \cite{AHM2008} we can show the desired result except the boundedness of $ \Vert \us(z;\cdot) \Vert_{C^2(\mathbb{S}^{N - 1})}$  for any $z \in \R$. We start from \eqref{eq:TW_lin_explicit}.  By \eqref{cond:f-bistable} and \eqref{cond:D_pos} one can say that 
    \begin{align*}
        W_e(s) \le C_W
        ,~~
        C_W^{-1} (s - \a_-)^2 (\a_+ - s)^2
        \le
        W_e(s)
        \le
        C_W (s - \a_-)^2 (\a_+ - s)^2,
    \end{align*}
    for every $e \in \mathbb{S}^{N - 1}$, where $C_W$ is some positive constant. This implies that 
    \begin{align*}
        \left|
        \int_\a^s
        \partial_{e_i}
        \left(
            \dfrac{ a_e(t)}{\sqrt{W_e(t)}}
        \right)
        dt
        \right|
        +
        \left|
        \int_\a^s
        \pei \pej
        \left(
            \dfrac{ a_e(t)}{\sqrt{W_e(t)}}
        \right)
        dt
        \right|
        &\le 
        \tilde{C}_W 
        \left| 
        \ln (s - \a_-) 
        \right | 
        \\
        & +
        \tilde{C}_W \left| \ln(\a_+ - s)
        \right|
    \end{align*}
    for  every $e \in \mathbb{S}^{N - 1}$ and $1 \le i,j \le N$, where $\tilde{C}_W$ is some positive constant. Moreover, from \eqref{eq:tw_1-2} we can derive that 
    \begin{align*}
        \usz
        \le
        c_W (\us - \a_-)(\a_+ - \us)
    \end{align*}
    for  every $e \in \mathbb{S}^{N - 1}$, where $c_W$ is some positive constant. Thus we obtain
    \begin{align*}
        |\pei \us|
        \le
        \tilde{c}_W (\us - \a_-)(\a_+ - \us)
        (| \ln (\us - \a_-) | + | \ln(\a_+ - \us) | )
    \end{align*}
    for  every $e \in \mathbb{S}^{N - 1}$, where $\tilde{c}_W$ is some positive constant. Also, from direct computations we can also obtain that
    \begin{align*}
        |\pei \usz|
        &\le
        \tilde{c}_W (\us - \a_-)(\a_+ - \us) (| \ln (\us - \a_-)| + |\ln(\a_+ - \us) | )
        ,\\
        |\pei \pej \us|
        &\le
        \tilde{c}_W (\us - \a_-)(\a_+ - \us) \left(| \ln (\us - \a_-)|^2 + |\ln(\a_+ - \us) |^2\right),
    \end{align*}
    by choosing $\tilde{c}_W$ larger if needed. Therefore by \eqref{lem:TW-est1} we obtain the desired result.
    
\end{proof}

For $\ul$, as discussed in the Remark \ref{rem:d0-p} we need a different $G(z,x,t)$ of \eqref{eq:TW-lin} instead of the one used in \eqref{eqn:TW_lin,formal}. For this purpose, we define $\ul(z,x,t;e)$ as a solution satisfying the following ordinary differential equation:
\begin{align}
    \begin{cases}
        (a_e(\us)\ul)_{zz} + f'(\us) \ul
        =
        \mathcal{G}(z,x,t;e)
        ,~~z \in \R,~e \in \mathbb{S}^{N - 1}
        \\
        \ul(0;e) = 0
        ,~~\ul(\cdot;e) \in L^\infty(\R).
    \end{cases}\label{eq:TW-lin_subsuper}
\end{align}
Here $\mathcal{G}(z,x,t;e)$ is a function defined by
\begin{align*}
    \mathcal{G}(z,x,t;e)
    &=
    [
        (\msij(e) \usz - \D(\us) \usz)
    \\
    &
        +
        (\mlij(e) \usz - ( \pei(a_e)(\us) \use )_z)
    ]\pvi \pvj d,
\end{align*}
where we omitted the summation $\sum_{i,j = 1}^N$. Note that we replaced $d_0$ in \eqref{eqn:TW_lin,formal} by the cutoff signed distance function $d$. Moreover, as  $\sum_{i,j = 1}^N \mij(\nabla d) \pvi \pvj d$ is close to $\partial_t d$ in view of \eqref{eq:d-p} and Lemma \ref{lem:d}, we replaced $\partial_t d_0$ in \eqref{eqn:TW_lin,formal} to $\sum_{i,j = 1}^N \mij(e) \pvi \pvj d$. Due to the definitions of $\msij(e)$ and $\mlij(e)$ the function $\mathcal{G}$ now satisfies the condition \eqref{lem:TWlin_sol1} independent to the choice of $(x,t)$. We also give estimates of $\ul$ which will be needed later.
\begin{lem}\label{lem:TWlin_est}
There exists positive constants $C_1, \la_1$ such that 
\begin{align*}
    |\partial_t \ul|
    +
    \left| \partial_z^{k_z} \pvi^{k_i} \pej^{k_j} \ul \right|  
    \le 
    C_1 e^{- \la_1 |z|}
\end{align*}
for all $1 \le i,j \le N, (z,x,t;e) \in \R \times \Om \times [0,T] \times \mathbb{S}^{N - 1}$, where 
\begin{align*}
    k_z, k_i, k_j \in \mathbb{Z}^+,\, k_z + \sum_{i = 1}^N  k_i + \sum_{j = 1}^N  k_j \le 2.
\end{align*}
\end{lem}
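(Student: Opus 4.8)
The plan is to use the explicit representation of $\ul$ given by Lemma \ref{lem:TWlin_sol}, namely
\[
    \ul(z,x,t;e)
    =
    \usz
    \int_0^z \dfrac{1}{(A_e(\us(\xi))_z)^2}
    \left( \int_{-\infty}^\xi \mathcal{G}(\zeta,x,t;e) A_e(\us(\zeta))_z \, d\zeta \right)
    d\xi,
\]
which is valid precisely because $\mathcal{G}$ satisfies the solvability condition \eqref{lem:TWlin_sol1} for every $(x,t)$, by the choice of $\msij$ and $\mlij$. First I would record the key structural fact that $\mathcal{G}$ is a linear combination, with coefficients $\pvi\pvj d$ that are bounded by Lemma \ref{lem:d}(i), of the functions
\[
    \msij(e)\usz - \D(\us)\usz
    \quad\text{and}\quad
    \mlij(e)\usz - (\pei(a_e)(\us)\use)_z,
\]
each of which decays like $e^{-\lambda_0|z|}$ by Lemma \ref{lem:TW-est} (for the second family one uses that $\use$ and $\usz$ decay exponentially, together with $\|D\|_{C^3}\le C_D$). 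Hence $|\mathcal{G}(z,x,t;e)|\le C e^{-\lambda_0|z|}$ uniformly in $(x,t;e)$, and the same bound holds for $\partial_t\mathcal{G}$ and for $\pvi\mathcal{G},\pej\mathcal{G},\pej^2\mathcal{G},\dots$ up to the needed order, since time and space enter $\mathcal{G}$ only through the smooth bounded factors $\pvi\pvj d$ (their $t$- and $x$-derivatives are again bounded by Lemma \ref{lem:d}(i)).

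Next I would feed this decay estimate into the representation formula. Since $A_e(\us)_z = \sqrt{W_e(\us)}$ by \eqref{eq:tw_1-2}, and $W_e(\us)\asymp (\us-\am)^2(\ap-\us)^2$ with $\us-\am, \ap-\us$ comparable to $e^{-\lambda_0|z|}$ at $\pm\infty$ by Lemma \ref{lem:TW-est}, the inner integral $\int_{-\infty}^\xi \mathcal{G} A_e(\us)_z\, d\zeta$ is uniformly bounded and, crucially, vanishes like $e^{-\lambda_0|\xi|}$ as $\xi\to-\infty$ (because of exponential decay of the integrand) and like $e^{-\lambda_0|\xi|}$ as $\xi\to+\infty$ (because of the solvability condition, which forces the full integral over $\R$ to be zero, so the tail equals $-\int_\xi^\infty$, again exponentially small). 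Dividing by $(A_e(\us)_z)^2\asymp e^{-2\lambda_0|\xi|}$ and multiplying by the outer $\usz\asymp e^{-\lambda_0|z|}$, one checks by the same bookkeeping as in the proof of Lemma \ref{lem:TW-est} that the product decays like $e^{-\lambda_1|z|}$ for some $0<\lambda_1<\lambda_0$ (any $\lambda_1$ strictly less than $\lambda_0$ works, the loss coming from the polynomial/logarithmic factors picked up when integrating against $1/W_e$). Differentiating the representation formula in $z$ produces at worst the same types of terms — $\uszz$, $\usz$ times the integrand evaluated at $z$, etc. — all controlled identically, giving the bound for $\partial_z^{k_z}$ with $k_z\le 2$.

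For the $x$- and $t$-derivatives one differentiates the representation under the integral sign: only the factor $\mathcal{G}$ depends on $(x,t)$, so $\partial_t\ul$ and $\pvi\ul$ are given by the same formula with $\mathcal{G}$ replaced by $\partial_t\mathcal{G}$, $\pvi\mathcal{G}$, which satisfy the same exponential bound; one must also note that $\partial_t\mathcal{G}$ and $\pvi\mathcal{G}$ still satisfy the solvability condition \eqref{lem:TWlin_sol1} (differentiate the identity $\int_\R \mathcal{G}(A_e(\us))_z\,dz=0$ in $t$ or $x_i$ — the $z$-integrand has no $(x,t)$-dependence outside $\mathcal{G}$), so the tail-cancellation argument at $+\infty$ goes through unchanged. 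Mixed derivatives $\partial_z^{k_z}\pvi^{k_i}\pej^{k_j}\ul$ with total order $\le 2$ are handled by combining the two procedures. The main obstacle, and the only place real care is needed, is the behaviour at $z\to+\infty$: without the solvability condition the inner integral would tend to a nonzero constant and, divided by the exponentially small $(A_e(\us)_z)^2$, would blow up; it is essential to exploit that $\mathcal{G}$ (and each of its $(x,t)$-derivatives) integrates to zero against $(A_e(\us))_z$ so that the inner integral is genuinely a decaying tail, and then to track the exact exponential rate surviving the division, which forces $\lambda_1<\lambda_0$.
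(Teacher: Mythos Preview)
Your approach is sound but takes a different route from the paper. The paper's proof is very brief: for the $z$-, $x$-, and $t$-derivatives it simply invokes Lemma~\ref{lem:d} together with the estimates in \cite{EFHPS-2}, and for the $e$-derivatives it observes that $\pej\ul$ is itself a solution of a linearized problem of the form~\eqref{eq:TW-lin}, with right-hand side
\[
G = \pej\mathcal{G} - \big(\pej(a_e(\us))\,\ul\big)_{zz} - \pej(f'(\us))\,\ul,
\]
and then appeals to the same machinery that bounded $\ul$. In other words, the paper differentiates the \emph{equation} rather than the representation formula, reducing each derivative of $\ul$ to a fresh instance of an already-solved problem. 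Your route---differentiating the explicit formula from Lemma~\ref{lem:TWlin_sol} and tracking decay rates through the nested integrals---is more self-contained (no external citation) but correspondingly more laborious, particularly for the $e$-derivatives, where $\usz$, $(A_e(\us))_z$, and $\mathcal{G}$ all depend on $e$, so your justification for the differentiated solvability condition (``the $z$-integrand has no dependence outside $\mathcal{G}$'') does not carry over verbatim and must be replaced by differentiating the identity $\int_\R \mathcal{G}\,(A_e(\us))_z\,dz=0$ in $e_j$ directly.

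One point to tighten: the rate $e^{-\lambda_0|\xi|}$ you state for the inner integral $\int_{-\infty}^{\xi}\mathcal{G}\,(A_e(\us))_z\,d\zeta$ is too slow to close the argument, since dividing by $(A_e(\us)_z)^2\asymp e^{-2\lambda_0|\xi|}$ would then produce exponential growth. The correct rate is $e^{-2\lambda_0|\xi|}$ (both factors in the integrand decay at rate $\lambda_0$), which makes the quotient bounded, the outer integral at most linear in $|z|$, and the final product with $\usz$ genuinely decay like $e^{-\lambda_1|z|}$ for any $\lambda_1<\lambda_0$.
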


\begin{proof}
The boundedness of derivatives with respect to $z$, $x$ and $t$ are guaranteed by Lemma \ref{lem:d} and  \cite{EFHPS-2}. Thus we focus on the boundedness of derivatives with respect to $e$. For this, by noting that $\pvi \ul$ satisfies the equation \eqref{eq:TW-lin} with
\begin{align*}
    G(z,x,t;e)
    =
    \pvi \mathcal{G}(z,x,t;e)
    -
    (\pvi(a_e(\us))\ul)_{zz}
    -
    \pvi(f'(\us))\ul,
\end{align*}
one can use the same reasoning as above to show the desired result.
\end{proof}

\subsection{Construction of sub- and super-solutions}

In this section, we construct a pair of sub- and super-solutions using $\us$ and $\ul$. We construct our sub-and super-solutions $u^\pm$ modifying $\tilde{u}^\pm$ in the form
\begin{align*}
    \tilde{u}^\pm
    =
    \us
    \left(
        z_d; \nabla d
    \right)
    +
    \e \ul 
    \left(
        z_d, x, t; \nabla d
    \right),
\end{align*}
where $z_d \simeq d(x,t)/\e$ and we will define later. However, this form is well-defined only when $\nabla d \in \mathbb{S}^{N - 1}$; thus $\tilde{u}^\pm$ are defined only near the interface $\Ga_t$ within the distance $2 d_0$. In order to define the sub- and super-solutions also away from the interface, we cut-off the function $\tilde{u}^\pm$. Similar to the function used in Section \ref{sec:ctoff-d} choose a smooth function $\rho_i(s), i= 1,2$  on $\R$ such that $0 \le \rho_1 \le 1$ and
\begin{align*}
    \rho_1(s)
    &=
    \begin{cases}
        0
        &
        \text{if}~ |s| \le \tilde{d}_0,
        \\
        1
        &
        \text{if}~ |s| \ge 2 \tilde{d}_0,
    \end{cases}
    \\
    \rho_2(s)
    &=
    \begin{cases}
        \a_+
        &
        \text{if}~ s \ge \tilde{d}_0,
        \\
        \a_-
        &
        \text{if}~ s \le - \tilde{d}_0.
    \end{cases}
\end{align*}
Then we define our sub- and super-solutions $u^\pm$ as follows;
\begin{align*}
    u^\pm
    =
    (1 - \rho_1(d)) \tilde{u}^\pm  + \rho_1(d) \rho_2(d) \pm q(t)
\end{align*}
where 
\begin{align}
    z_d(x,t)
    &=
    \dfrac{d(x,t) \pm \e p(t)}{\e}\label{eq:zd}
    ,\\
    p(t) 
    &= - e^{- \beta t/\e^2} + e^{L t} + K\nonumber
    ,\\
    q(t) 
    &= \sigma(\beta e^{- \beta t/\e^2} + \e^2 L e^{Lt} ).\nonumber
\end{align}
Here $\sigma, \beta, L$ and $K$ are positive constants which will be defined later. In addition we assume $0 < \e_0 < 1$ small enough such that 
\begin{align}\label{cond:q_bound}
    \e_0 p(t)
    \le
    \tilde{d}_0/2
    ,~~
    |\e_0 \ul| + q(t)
    \le
    \e_0 C_1 
    +
    \sigma(\beta + \e_0^2 L e^{LT})
    \le
    \eta_0
    ,~~
    L \e_0^2 e^{LT} < 1.
\end{align}

Constructed functions $u^\pm$ are composed of mainly 3 terms; $\us, \ul$ and $q$. Each of the terms has important purpose in making $u^\pm$ as sub- and super-solutions. As we discussed in Section \ref{sec:formal_asympt}, the function $\us(z_d;\nabla d)$ helps us to describe the steep transition layer connecting the stable steady states $\a_\pm$ and the function $\ul(z_d,x,t; \nabla d)$ helps us to describe the motion equation.
The term $q(t)$ helps us to make the constructed functions $u^\pm$ to be an actual sub- and super- solutions. Intuitively, since $\tilde{u}^\pm$ are expected to be close to the actual solution $u^\e$, the term $\pm q$ adjusts the function $\tilde{u}^\pm$ thereby giving an upper and lower bound of $u^\e$. 
Note that, the scale of $q$ changes as time goes. In the beginning, $q$ has of scale $\o(1)$ and decreases exponentially fast towards the scale of $\o(\e^2)$. To distinguish this scale to others we denote scales related to $q$ as $\o(q)$.

We give the following lemma for $u_\pm$.

\begin{lem}\label{lem:Prop_subsuper}
For any $K> 1$ there exist large enough $L> 0$ and small enough $0 < \sigma,  \e_0 < 1$ such that 
\begin{align}\label{eqn_Prop_subsuper}
	\begin{cases}
		\mathcal{L} (u^-) \leq 0 \leq \mathcal{L}(u^+)
		&
		\text{ in } \Om \times [0,T - t^\e]
		\\
		\displaystyle{\frac{\partial u^-}{\partial \nu}
		= \frac{\partial u^+}{\partial \nu}}
		= 0
		&
		\text{ on } \partial \Om \times [0,T - t^\e]
	\end{cases}
\end{align}
for every $\e \in (0, \e_0)$.
\end{lem}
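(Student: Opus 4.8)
The plan is to verify the differential inequality $\mathcal{L}(u^+) \ge 0$ directly by computing $\mathcal{L}$ applied to $u^\pm$ and showing that the dominant contributions are controlled by the $q(t)$-term; the sub-solution case is symmetric. I would split $\Omega \times [0, T-t^\e]$ into three regions according to the value of $d_0(x,t)$: (a) the \emph{inner region} $\{|d_0| \le \tilde d_0\}$ where $\rho_1(d) = 0$ and $u^\pm = \tilde u^\pm \pm q$ coincides with the modified asymptotic expansion; (b) the \emph{outer region} $\{|d_0| \ge 2\tilde d_0\}$ where $\rho_1(d) = 1$ and $u^\pm = \rho_2(d) \pm q = \alpha_\pm \pm q$ is essentially constant, so $\mathcal{L}(u^\pm)$ reduces to $\pm q'(t) - \e^{-2} f(\alpha_\pm \pm q)$; and (c) the \emph{transition region} $\{\tilde d_0 \le |d_0| \le 2\tilde d_0\}$ where both cutoffs $\rho_1, \rho_2$ are active. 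The Neumann boundary condition is automatic: since $\mathrm{dist}(\Gamma_t, \partial\Omega) \ge 4\tilde d_0$, on $\partial\Omega$ we are in the outer region where $u^\pm = \alpha_\pm \pm q(t)$ depends only on $t$, so $\partial u^\pm/\partial\nu = 0$.

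The heart of the argument is region (a). Here I would write $z_d = (d \pm \e p(t))/\e$ and compute $\partial_t$, $\nabla$, $\nabla^2$ of $\us(z_d; \nabla d) + \e\ul(z_d, x, t; \nabla d)$ using the chain rule, keeping track of where $\nabla d$, $\nabla^2 d$, $\partial_t d$ appear. Substituting into $\mathcal{L}$ and using the defining ODEs \eqref{eq:TW-e} for $\us$ and \eqref{eq:TW-lin_subsuper} for $\ul$, the $\o(\e^{-2})$ and $\o(\e^{-1})$ terms cancel \emph{by construction of} $\mathcal{G}$ — this is exactly why $\ul$ was redefined in Section \ref{sec:propagation} rather than taken from the formal expansion. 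What survives is: (i) the $\pm p'(t)$ contribution coming from $\partial_t z_d$, which produces a term $\mp \usz \, p'(t)/\e$ of definite sign once we check $p'(t) > 0$ (true since $p'(t) = (\beta/\e^2) e^{-\beta t/\e^2} + L e^{Lt} > 0$); (ii) the error $\partial_t d - \sum \mij(\nabla d)\pvi\pvj d$, which by Lemma \ref{lem:d}(ii) is $\le C_d |d| \le C_d \e |z_d| + \o(q)$ near the interface, and is absorbed using $|z_d| \usz \le C e^{-\lambda_0|z|/2}$ from Lemma \ref{lem:TW-est}; (iii) genuinely lower-order remainders of size $\o(\e)$ coming from $x$-derivatives of $\us$ through $\nabla d$, from the difference between $a_e(\us)$ evaluated at $\nabla d$ versus the true gradient direction, and from $\e\ul$ and its derivatives, all bounded via Lemmas \ref{lem:TW-est} and \ref{lem:TWlin_est} by $C e^{-\lambda|z|} + \o(q)$; and (iv) the reaction-term expansion remainder $\e^{-2}[f(\us + \e\ul) - f(\us) - \e f'(\us)\ul] = \o(1)$ by a Taylor estimate and $C^2$-regularity of $f$. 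Collecting, $\mathcal{L}(u^+) \ge \usz \cdot (\text{something} \ge p'(t)/\e \cdot c) - C(e^{-\lambda|z|} + 1) + q'(t) - \e^{-2} f'(\alpha) q + \dots$; one then checks that the choice $q(t) = \sigma(\beta e^{-\beta t/\e^2} + \e^2 L e^{Lt})$ makes $q'(t) = \sigma(-\beta^2 e^{-\beta t/\e^2}/\e^2 + \e^2 L^2 e^{Lt})$, and the combination $q'(t) - \nu\e^{-2} q(t)$ together with the $p'$-term dominates the $\o(1)$ and $\o(e^{-\lambda|z|})$ errors provided $\beta$ is chosen (depending on $\nu$, in fact $\beta < \nu$ roughly so that the negative $-\beta^2/\e^2$ piece is beaten by $-\nu\beta/\e^2$ coming from $-\nu q/\e^2$), $L$ is chosen large (to dominate the $\o(1)$ time-independent errors), and $\sigma, \e_0$ are chosen small. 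The constraints \eqref{cond:q_bound} are exactly what is needed to keep $z_d$ in the regime where $|\nabla d| = 1$ and to keep $u^\pm$ in $[\alpha_- - \eta_0, \alpha_+ + \eta_0]$ so that $f$ behaves well.

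For regions (b) and (c): in (b), $u^+ = \alpha_+ + q(t)$ with $q(t) \le \eta_0$ small, so $f(\alpha_+ + q) \le f'(\alpha_+) q/2 < 0$ for $\sigma$ small (using $f'(\alpha_+) < 0$ and $C^2$-smoothness), hence $\mathcal{L}(u^+) = q'(t) - \e^{-2} f(\alpha_+ + q) \ge q'(t) + c\e^{-2} q$; since $q' + c\e^{-2}q = \sigma(-\beta^2 e^{-\beta t/\e^2}/\e^2 + \e^2 L^2 e^{Lt} + c\beta e^{-\beta t/\e^2}/\e^2 + cL e^{Lt}) \ge 0$ once $\beta \le c$, we are done (and similarly on $\Omega_t^-$ where $u^+ = \alpha_- + q$ and we use $f'(\alpha_-) < 0$). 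Region (c) is the technically messiest: here $\mathcal{L}(u^+)$ picks up terms from $\nabla \rho_1(d)$, $\nabla^2\rho_1(d)$ and the interpolation between $\tilde u^+$ and $\alpha_\pm$, but crucially in this region $|d_0| \ge \tilde d_0$ is bounded away from zero, so $\us(z_d;\cdot)$ is exponentially close to $\alpha_\pm$ and $\usz, \ul$ and all their derivatives are $\o(e^{-\lambda_0 \tilde d_0/\e})$ — exponentially small in $\e$ — by Lemmas \ref{lem:TW-est} and \ref{lem:TWlin_est}. Thus $\tilde u^+$ and $\rho_2(d)$ differ by an exponentially small amount along with their derivatives, the cutoff-generated terms are all $\o(e^{-\lambda_0\tilde d_0/\e}) = \o(1)$ (in fact far smaller than $q'$), and the same $q'(t) + c\e^{-2} q \ge 0$ mechanism closes the estimate. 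The main obstacle I anticipate is region (a): bookkeeping all the $\o(\e)$ and $\o(|z| e^{-\lambda|z|})$ error terms and verifying that the \emph{signs} work out — in particular that the $p'(t)/\e$ term has the right sign to help the super-solution (and not the sub-solution), and that the algebra of choosing $\beta$ versus $\nu$ and $L$ versus the accumulated constants is consistent; this is where the proof in \cite{AHM2008} for the linear-diffusion case must be adapted to accommodate the $\nabla d$-dependence of $\us$ and the anisotropic $a_e$.
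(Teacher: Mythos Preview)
Your overall architecture is the same as the paper's: split into the inner region $\{|d|\le\tilde d_0\}$, the transition annulus $\{\tilde d_0\le|d|\le 2\tilde d_0\}$, and the outer region $\{|d|\ge 2\tilde d_0\}$; handle the boundary condition by noting $u^\pm$ is spatially constant near $\partial\Omega$; use the equations for $U_0$ and $U_1$ to kill the leading orders; control the residual $\partial_t d-\sum\mu_{ij}\partial_{x_i}\partial_{x_j}d$ via Lemma~\ref{lem:d}(ii); and exploit exponential smallness in the annulus. Regions (b) and (c) are handled essentially as in the paper.

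There is, however, a genuine gap in your mechanism for region (a). You write the key balance as $q'(t)-\nu\,\e^{-2}q(t)$ with $\nu=f'(\alpha)>0$, and propose $\beta<\nu$ so that ``$-\beta^2/\e^2$ is beaten by $-\nu\beta/\e^2$''. Both of those terms are \emph{negative}, so they do not cancel. The actual linearisation of the reaction is $-\e^{-2}f'(U_0)\,q$, not $-\e^{-2}\nu\,q$: near $z=0$ this coefficient is indeed $\approx -\nu<0$, but as $|z|\to\infty$ it tends to $-f'(\alpha_\pm)>0$, and \emph{that} is what rescues the estimate for large $|z|$. Moreover, Taylor-expanding $D_{ij}(u^+)$ around $U_0$ contributes an additional $-\e^{-2}(a_e(U_0))_{zz}\,q$ to the $q$-coefficient, which you omit. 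In the paper the full bracket multiplying $q/\e^2$ is
\[
\frac{U_{0z}}{\sigma}\;-\;\big[(a_e(U_0))_{zz}+f'(U_0)\big],
\]
and the argument splits: for $|z|\ge Z_0$ one uses $-[(a_e(U_0))_{zz}+f'(U_0)]\ge 4B>0$ (from $f'(\alpha_\pm)<0$), while for $|z|\le Z_0$ one uses $U_{0z}/\sigma\ge 4B$ by taking $\sigma$ small. One then chooses $\beta\in(0,B)$, not $\beta<\nu$. Your combination $U_{0z}p'+q'-\nu q/\e^2=(q/\e^2)[U_{0z}/\sigma-\nu]+q'$ is strictly negative for large $|z|$ and small $\e$, so the inequality fails there. (Two smaller slips: the $p'$ contribution from $\partial_t z_d$ is $+U_{0z}\,p'$ for $u^+$, with no factor $1/\e$; and your claimed $|d|\le C_d\e|z_d|+\mathcal O(q)$ should read $\mathcal O(\e p)=\mathcal O(\e)$.)
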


\begin{proof}
    From this proof we denote $e$ as $\nabla d$. 
    We prove that $\ml(u^+) \ge 0$; by similar method one can prove also $\ml(u^-) \le 0$. Due to the cut-off in the solution $u^+$ we divide the case into three.

    \begin{enumerate}
    \item In the set $\Om'_T := \{ (x,t) \in \Om \times [0,T - t^\e],~ |d(x,t)| \le \tilde{d}_0\}$

    To show the assertion it is necessary to compute $\ml(u^+)$ directly. For this, we preform a similar computation as in Section \ref{sec:formal_asympt}; (1) Taylor expansion of the nonlinear terms such as $\D$ and $f$ and (2) direct computation of the derivatives. 
    We first preform the Taylor expansion, where we obtain 
    \begin{align}\label{eq:taylor}
    \begin{cases}
       \D(\us + \vp) = \D(\us) + \D'(\us) \vp + \dfrac{\D''(\theta_1(x,t))}{2}\vp^2,
        \\
        \D'(\us + \vp) = \D'(\us) + \D''(\us) \vp + \dfrac{\D'''(\theta_2(x,t))}{2}\vp^2,
        \\
        ~~~f(\us + \vp) = f(\us) + f'(\us) \vp + \dfrac{f''(\theta_3(x,t))}{2}\vp^2.
    \end{cases}
    \end{align}
    Here $\vp = \e \ul + q$ and $\theta_i$ are some constants between $\us$ and $\us + \vp$. We can divide the terms into 3 groups. (1) Terms only related to $\us$ such as $\D(\us), \D'(\us)$ and $f(\us)$, (2) terms related to $\e \ul$ and (3) terms related to $q$. Each of them represents the terms of scale $\o(1)$, $\o(\e)$  and $\o(q)$ respectively. 

    Next we preform direct computation of the derivatives. By noting that the $\o(\e^{-1})$ scale appears by taking derivatives with respect to $z_d$ as in \eqref{eq:zd}, one can see that $\o(\e^{-2})$ terms appear by taking derivative twice to the term $\us$ with respect to $z_d$, and $\o(\e^{-1})$ terms appear by taking derivative twice to the term $\e\ul$ with respect to $z_d$ or taking derivative one time to the term $\us$ with respect to $z_d$. Thus we obtain the following computations
    \begin{align}
        u_t^+
        =&
        \left(
            \usz + \e \ulz
        \right)
        \dfrac{d_t}{\e}
        +
        \partial_e( \us + \e \ul ) \cdot \nabla d_t
        + \e U_{1t}
        + q'\label{eq:deri_t}
        ,\\
        \pvi u^+
        =&
        \left(
            \usz + \e \ulz
        \right)
        \dfrac{\pvi d}{\e}
        +
        \partial_e( \us + \e \ul ) \cdot \pvi \nabla d 
        + \e \pvi \ul \label{eq:deri_x0}
        ,\\
        \pvi \pvj u^+
        =&
        \dfrac{\pvi d \pvj d}{\e^2} \uszz
        +
        \dfrac{r_{11ij}}{\e}
        + r_{12ij}(x,t)\label{eq:deri_x1}
        ,\\
        \pvi u^+ \pvj u^+
        =&
        \dfrac{\pvi d \pvj d}{\e^2} \usz^2
        +
        \dfrac{r_{21ij}}{\e}
        + r_{22ij}(x,t)\label{eq:deri_x2}
        ,
        \end{align}
        where
        \begin{align*}
        r_{11ij}
        =&
        \ulzz \pvi d \pvj d
            +
            \usz \pvi \pvj d
            +
            \pe \usz \cdot (\pvi d \pvj \nabla d + \pvj d \pvi \nabla d)
        ,\\
        r_{12ij}
        =&
        \ulz{\pvi \pvj d}  
        +
        \pe \ulz \cdot (\pvi d \pvj \nabla d + \pvj d \pvi \nabla d)
        +
        \pvi \ulz \pvj d
        \\
        +&
        [\partial_e^2(\us + \e \ul) \pvi \nabla d  
        +
        \e \partial_e\pvi \ul ] 
        \cdot \pvj \nabla d
        +
        \partial_e(\us + \e \ul) \cdot \pvi \pvj \nabla d
        \\
        +&
        \pvj \ulz \pvi d
        +
        \e \partial_e \pvj \ul \cdot \pvi \nabla d
        +
        \e \pvi \pvj \ul
        ,\\
        r_{21ij}
        =&
        2 \usz \ulz \pvi d \pvj d
            +
            \usz \pe \us \cdot (\pvi d \pvj \nabla d + \pvj d \pvi \nabla d)
        ,\\
        r_{22ij}
        =&
        \ulz^2 \pvi d \pvj d 
        \\
        +&
        (\usz \pe \ul + \ulz \pe \us + \e \ulz \pe \ul)\cdot (\pvi d \pvj \nabla d + \pvj d \pvi \nabla d)
        \\
        +&
        (\usz + \e \ulz) (\pvi d \pvj \ul + \pvj d \pvi \ul)
        \\
        +&
        (\partial_e(\us + \e \ul) \cdot \pvi \nabla d + \e \pvi \ul)
        (\partial_e(\us + \e \ul) \cdot \pvj \nabla d + \e \pvj \ul)
        \end{align*}
    Here the terms $\e^{-1}r_{11ij}, \e^{-1}r_{21ij}$ are $\o(\e^{-1})$ scale terms and $r_{12ij}, r_{22ij}$ are $\o(1)$ scale terms. Since $r_{11ij}, r_{12ij}, r_{21ij}, r_{22ij}$ consists of derivatives of $\us$ and $\ul$, by Lemmas \ref{lem:d}, \ref{lem:TW-est} and \ref{lem:TWlin_est}  there exists a positive constant $C_r$ such that 
    \begin{align}\label{eq:r_bounded}
        |r_{11ij}(x,t)| + |r_{12ij}(x,t)|
        +
        |r_{21ij}(x,t)| + |r_{22ij}(x,t)|
        \le C_r e^{- \tilde{\la} |z_d|},
    \end{align}
    in $\Om'_T$ and for every $1 \le i,j \le N$, where $\tilde{\la} = \min \{ \la_1, \la_2 \}$. Also, in a similar reason we can also say that 
    \begin{align}\label{eq:r_bounded_1}
        \left|\pvi u^+ \pvj u^+ \right| 
        + 
        \left|\pvi \pvj u^+ \right| 
        \le
        \dfrac{C_r}{\e^2} e^{- \tilde\la |z_d|},
    \end{align}
    by letting $C_r$ larger if needed. Note that such $C_r$ can be chosen independent to the construction of $u^+$.
    
    Combining these we first compute the leading terms $\o(\e^{-2})$ and $\o(\e^{-1})$ in
    \begin{align*}
        \pvi(\D(u^+) \pvj u^+) 
        =
        \D(u^+) \pvi \pvj u^+ 
        +
        \D'(u^+) \pvi u^+ \pvj u^+.
    \end{align*}
    To obtain the $\o(\e^{-2})$ scale terms we need to multiply the $\o(1)$ scale terms of \eqref{eq:taylor} and $\o(\e^{-2})$ scale terms of \eqref{eq:deri_x1} and \eqref{eq:deri_x2}, which gives
    \begin{align*}
        \sum_{i,j = 1}^N (\D(\us) \uszz + \D'(\us) \usz^2)\pvi d \pvj d
        &=
        \sum_{i,j = 1}^N (\D(\us) \usz)_z \pvi d \pvj d
        \\
        &=
        (a_e(\us) \usz)_z
        =
        (A_e(\us))_{zz},
    \end{align*}
    where the equality holds since $e_i = \pvi d$ is independent to $z$.
    To obtain the $\o(\e^{-1})$ scale terms we multiply 
    (i) $\o(1)$ scale terms of \eqref{eq:taylor} and $\o(\e^{-1})$ scale terms of \eqref{eq:deri_x1} and \eqref{eq:deri_x2}, that is, $\D(\us) r_{11ij} + \D'(\us) r_{21ij;}$
    (ii) $\o(\e)$ scale terms of \eqref{eq:taylor} and $\o(\e^{-2})$ scale terms of \eqref{eq:deri_x1} and \eqref{eq:deri_x2}(as we mentioned above, $q$ is excluded here), which gives
    \begin{align*}
        (\D(\us) \ulzz + 2\D'(\us) \usz \ulz)\pvi d \pvj d 
        &=
        a_e(\us)\ulzz + 2 a_e'(\us) \usz \ulz
        \tag{i-1}
        \\
        &=
        a_e(\us)\ulzz + 2 a_e(\us)_z \ulz
        \\
        \D(\us) \usz \pvi \pvj d\tag{i-2}
        \\
        \D(\us) \pe \usz \cdot (\pvi d \pvj \nabla d + \pvj d \pvi \nabla d)
        &=
        2\D(\us) \pe \usz \cdot \pvi d \pvj \nabla d \tag{i-3}
        \\
        &=
        \pei(a_e)(\us) \pej \usz \pvi \pvj d
        \\
        \D'(\us) \usz \pe \us  \cdot (\pvi d \pvj \nabla d + \pvj d \pvi \nabla d)
        &=
        2\D'(\us) \usz \pe \us \cdot \pvi d \pvj \nabla d \tag{i-4}
        \\
        &=
        \pei(a'_e)(\us) \usz \pej \us  \pvi \pvj d
        \\
        (\D'(\us) \uszz  + \D''(\us) \usz^2) \ul \pvi d \pvj d \tag{ii}
        &=
        (a_e'(\us)\usz)_z \ul
        \\
        &=
        (a_e(\us))_{zz} \ul,
    \end{align*}
    where we omitted the summation $\sum_{i,j = 1}^N$. Here the first equalities of (i-3) and (i-4) holds since $\D$ is symmetric. Also, combining the computations (i-1) and (ii) gives
    \begin{align*}
        \text{(i-1)} + \text{(ii)}
        =
        (a_e(\us)\ul)_{zz},
    \end{align*}
    and combining (i-3) and (i-4) gives
    \begin{align*}
        \text{(i-3)} + \text{(i-4)}
        =
        (\pei(a_e)(\us) \use)_z \pvi \pvj d.
    \end{align*}

    With computations above, we can write $\sum_{i,j = 1}^N \pvi(\D(u^+) \pvj u^+)$  as follows
    \begin{align*}
        \sum_{i,j = 1}^N
        \pvi (\D(u^+) \pvj u^+)
        =&
        \dfrac{A_e(\us)_{zz}}{\e^2}
        + \dfrac{(a_e(\us)\ul)_{zz}}{\e}
        \\
        +& \dfrac{\D(\us)\usz + (\pei(a_e)(\us) \use )_z}{\e}\pvi \pvj d
        \\
        +& (\D'(\us) \pvi \pvj u^+ + \D''(\us) \pvi u^+ \pvj u^+)q
        \\
        +&\dfrac{(\e \ul + q)^2}{2}
        \left[
            \D''(\theta_1) \pvi \pvj u^+  
            + \D'''(\theta_2) \pvi u^+ \pvj u^+ 
        \right]
        \\
        +& R(x,t)
        ,
        \\
        R(x,t)
        =&
        \D'(\us) \ul r_{11ij}
        +
        \D(u^+) r_{12ij} 
        \\
        +&
        \D''(\us) \ul r_{21ij}
        +
        \D'(u^+) r_{22ij} 
    \end{align*}
    where we omitted the summation $\sum_{i,j = 1}^N$ in the right hand side. Note that except the term $R$, we wrote every $\o(\e^{-2})$, $\o(\e^{-1})$ scale terms and terms multiplied with $q$. Since $D$ is assumed to be $C^3$ and bounded by \eqref{cond:D_pos}, by \eqref{eq:r_bounded} there exists a positive constant $C_R$ independent to the construction of $u^+$ such that 
    \begin{align}\label{eq:R_bounded}
        |R(x,t)| \le C_R e^{- \tilde{\la} |z_d|},
    \end{align}
    holds. With this, \eqref{eq:taylor} for $f(\us + \vp)$ and \eqref{eq:deri_t} for $u^+_t$, we can divide the terms of $\ml(u^+)$ as follows
    \begin{align*}
        \ml(u^+)
        =
        E_1 + \cdots + E_5
    \end{align*}
    where
    \begin{align*}
        E_1
        =&
        - \dfrac{A_e(\us)_{zz} + f(\us)}{\e^2}
        ,\\
        E_2
        =&
        \dfrac{\usz}{\e}
        d_t
        \\
        -&
        \dfrac{1}{\e}
        \left[
            (a_e(\us)\ul)_{zz} + f'(\us) \ul
        \right.
        \\
        &\left.
            + (\D(\us) \usz  + (\pei(a_e)(\us) \use)_z)\pvi \pvj d
        \right]
        ,\\
        E_3
        =&
        {\ulz}d_t
        +
        \partial_e( \us + \e \ul ) \cdot \nabla d_t
        +
        \e U_{1t}
        - R
        ,\\
        E_4
        =&
        \usz p' + q' 
        - (\D'(\us) \pvi \pvj u^+ + \D''(\us) \pvi u^+ \pvj u^+)q
        \\
        -& f'(\us)\dfrac{q}{\e^2} 
        + 
        \e \ulz p'
        ,\\
        E_5
        =&
        \dfrac{(\e \ul + q)^2}{2}
        \left[
            \D''(\theta_1) \pvi \pvj u^+  
            + \D'''(\theta_2) \pvi u^+ \pvj u^+ 
            + \dfrac{f''(\theta_3)}{\e^2}
        \right]
        .
    \end{align*}
    The terms $E_i$ are gathered in the following way; $E_1$, $E_2$ and $E_3$ are composed of the terms of order $\o(\e^{-2})$, $\o(\e^{-1})$ and $\o(1)$ respectively except the terms with $p$ and $q$, $E_4$ composed of the terms multiplied with $p$ and $q$ and $E_5$ composed of the terms multiplied with $(\e \ul + q)^2$. 

    \begin{enumerate}[label =(\roman*)]
        \item The term $E_1$. By \eqref{eq:TW-e} we have
        \begin{align*}
            E_1 = 0.
        \end{align*}

        \item The term $E_2$. By \eqref{eq:TW-lin_subsuper} we have
        \begin{align*}
            E_2 = \dfrac{d_t - \mij(e) \pvi \pvj d}{\e} \usz,
        \end{align*}
        where we omitted the summation $\sum_{i,j = 1}^N $. Then, by Lemmas \ref{lem:d} and \ref{lem:TW-est} we obtain
        \begin{align*}
            |E_2| 
            \le &
            \dfrac{C_d |d|}{\e} \usz
            \le
            C_d C_0 (p(t) + |z_d|) e^{-\la_0|z_d|}
            \\
            \le&
            C_d C_0 (e^{Lt} + K + |z_d|) e^{-\la_0|z_d|}
            \\
            \le&
            S_2 e^{Lt},
        \end{align*}
        for some positive constant $S_2$, where the last inequality holds since $|z| e^{-\la_0|z|}$ is bounded in $\R$.

        \item The term $E_3$. 
        By Lemmas \ref{lem:d}, \ref{lem:TW-est}, \ref{lem:TWlin_est} and \eqref{eq:R_bounded} we obtain
        \begin{align*}
            |E_3|
            \le
            S_3,
        \end{align*}
        for some positive constant $S_3$.

        \item The term $E_4$. In view of \eqref{eq:deri_x1} and \eqref{eq:deri_x2}, the $\o(\e^{-2})$ scale leading term of $\D'(\us) \pvi \pvj u^+ + \D''(\us) \pvi u^+ \pvj u^+$ is
        \begin{align*}
            \sum_{i,j = 1}^N
            (\D'(\us) \uszz + \D''(\us) \usz^2) \pvi d \pvj d
            =
            (a_e(\us))_{zz}.
        \end{align*}
        Let
        \begin{align*}
            \tilde{R}(x,t) : = \D'(\us) \pvi \pvj u^+ + \D''(\us) \pvi u^+ \pvj u^+ - \e^{-2}(a_e(\us))_{zz}.
        \end{align*}
        Then, \eqref{eq:deri_x1} and \eqref{eq:deri_x2} show
        \begin{align*}
            \tilde{R}(x,t)
            =
            \D'(\us)
            \left(
                \dfrac{r_{11ij}}{\e} + r_{12ij}
            \right)
            +
            \D''(\us)
            \left(
                \dfrac{r_{21ij}}{\e} + r_{22ij}
            \right)
        \end{align*}
        where we omitted the summation $\sum_{i,j = 1}^N$. Thus,  by \eqref{eq:r_bounded} we have
        \begin{align*}
            \e^2 |\tilde{R}| \le \e \tilde{C}_R,
        \end{align*}
        for some positive constant $\tilde{C}_R$.
        
        Then, noting that $q = \e^2 \sigma p'$ and recalling the definitions of $\tilde{R}(x,t)$ and $q(t)$ we have
        \begin{align*}
            E_4
            =&
            \dfrac{q}{\e^2}
            \left[
                \dfrac{\usz}{\sigma} 
                - [a_e(\us)_{zz}  + f'(\us)]
                - \e^2 \tilde{R}
            \right]
            + q'
            + 
            \e \ulz p'
            \\
            =&
            \sigma\dfrac{\beta e^{-\beta t/\e^2}}{\e^2}
            \left[
                \dfrac{\usz}{\sigma} 
                - [a_e(\us)_{zz}  + f'(\us) + \e^2 \tilde{R}]
                - \beta
            \right]
            \\
            +&
            \sigma L e^{Lt}
            \left[
                \dfrac{\usz}{\sigma} 
                - [a_e(\us)_{zz} + f'(\us) + \e^2 \tilde{R}]
                + \e^2 L
            \right]
            + 
            \dfrac{\e \ulz}{\sigma \e^2} q.
        \end{align*}
        The fact that for any $e \in \mathbb{S}^{N - 1}$, $a_e(\us(z;e))_{zz}$ converges to 0 as $z \to \pm \infty$ by Lemma \ref{lem:TW-est} and that $f'(\us(z;e)) < 0$ for $|z|$ large enough by \eqref{cond:f-bistable} and \eqref{lem:TW-est1} implies that, choosing $Z_0 > 0$ large enough we can find a positive constant $B$ such that 
        \begin{align}\label{eq:f'bound}
            - [a_e(\us)_{zz} + f'(\us)] > 4B
        \end{align}
        for $|z| \ge Z_0$. Moreover, since $\usz > 0$ in $\R$, choosing $0 < \sigma < 1$ small enough we have
        \begin{align*}
            \dfrac{\usz}{\sigma} 
            - [a_e(\us)_{zz} + f'(\us)]
            \ge 4B,
        \end{align*}
        for $|z| \le |Z_0|$. Choose $\beta \in (0, B)$ and $\e_0 > 0$ small enough such that
        \begin{align}\label{eq:e0sigma_cond}
            \tilde{C}_R \e_0 \le B
            ,~
            \e_0 C_1 \le \sigma B,
        \end{align}
        where $C_1$ is a constant appeared in Lemma \ref{lem:TWlin_est}. 
        With this, we can derive that there exists a positive constant $S_4$(indeed, equal to $B$) satisfying
        \begin{align*}
            E_4 
            \ge &
            2\sigma\dfrac{\beta e^{-\beta t/\e^2}}{\e^2} B
            +
            3\sigma L e^{Lt} B
            - B\dfrac{q}{\e^2}
            \\
            \ge&
            2B \dfrac{q}{\e^2}
            - B\dfrac{q}{\e^2}
            \ge
            S_4 \dfrac{q}{\e^2}.
        \end{align*}

        \item The term $E_5$. 
        Note that the terms $\pvi \pvj u^+, \pvi u^+ \pvj u^+$ are $\o(\e^{-2})$ scale by \eqref{eq:r_bounded_1}. This implies that one can find a positive constant $\tilde{B}$ 
        \begin{align*}
            |E_5| 
            \le
            \tilde{B} 
            \left(
                \ul^2 + 2 \ul \dfrac{q}{\e} + \dfrac{q^2}{\e^2}
            \right).
        \end{align*}
        And, from \eqref{cond:q_bound} we can derive that
        \begin{align*}
            q \le \sigma(\beta + \e^2 L e^{LT})
            \le \sigma(\beta + 1).
        \end{align*}
        By Lemma \ref{lem:TWlin_est}, \eqref{eq:e0sigma_cond} and $q \le \sigma (\beta + 1)$, we see that
        \begin{align}\label{eq:(eu1+q)^2bound}
            \ul^2 + 2 \e C_1 \dfrac{q}{\e^2} + \dfrac{q^2}{\e^2}
            \le
            \ul^2 + 2 \sigma B \dfrac{q}{\e^2} + \sigma (\beta + 1) \dfrac{q}{\e^2}
            \le
            \tilde{B}' 
            \left(
                1 + \sigma \dfrac{q}{\e^2}
            \right),            
        \end{align}
        for some positive constant $\tilde{B}'$. 
        Since $\beta$ and $B$ are bounded constants, we can find a positive constant $S_5$ such that 
        \begin{align*}
            |E_5|
            \le
            S_5
            \left(
                1 + \sigma \dfrac{q}{\e^2}
            \right),
        \end{align*}
        holds.

    \end{enumerate}

    From the estiamtes above, we have
    \begin{align*}
        \ml(u^+)
        &\ge
        - (S_2 e^{Lt} + S_3 + S_5)
        +
        \left(
            {S_4} - S_5 \sigma
        \right) \dfrac{q}{\e^2}
        \\
        &=
        - (S_3 + S_5)
        +
        \left(
            {S_4}- S_5\sigma
        \right)
        \sigma \dfrac{\beta e^{- \beta t/\e^2}}{\e^2}
        +
        ({S_4}L \sigma - S_2 - S_5\sigma^2) e^{Lt}
        .
    \end{align*}
    By choosing $\sigma$ small enough and $L$ large enough we finally obtain $\ml(u^+) \ge 0.$

    \item In the set $\{ (x,t) \in \Om \times [0,T - t^\e],~ \tilde{d}_0 \le |d(x,t)| \le 2\tilde{d}_0\}$

    From here, we use $u^+ = (1 - \rho_1(d)) \tilde{u}^+ + \rho_1(d) \rho_2(d) + q$, where $\tilde{u}^+ = \us(z_d;e) + \e \ul(z_d,x,t;e) $. 
    Note that since $|d|$ is bounded below by $\tilde{d}_0$, from the boundedness of $p$ in \eqref{cond:q_bound} we see that 
    \begin{align*}
        \e|z_d|
        \ge
        |d| - \e p
        \ge
        \tilde{d}_0/2. 
    \end{align*}
    Moreover, as we assumed that the function $\rho_1(d) $ is smooth, we can find a constant $C_\rho$ such that
    \begin{align}\label{eq:rho_bound}
        \Vert \rho_1 \Vert_{C^2(\R)}
        \le 
        C_\rho.
    \end{align}
    Also, $\rho_2$ is $\a_+$ if $d \ge \tilde{d}_0$ and $\a_-$ if $d \le - \tilde{d}_0$, we do not need to consider the derivative of $\rho_2$. Moreover, by Lemmas \ref{lem:TW-est}, \ref{lem:TWlin_est}  we obtain that
    \begin{align}
        |\rho_2(d) - \tilde{u}^+|
        &\le
        |\rho_2(d) - \us | + |\e \ul|\nonumber
        \\
        &\le
        C_0 e^{- \la_0|z_d|} + \e C_1 e^{- \la_1|z_d|} \nonumber
        \\
        &\le
        (C_0 + \e C_1) e^{- \tilde\la \tilde{d}_0/2\e},
        \label{eq:rho2-u}
    \end{align}
    where $\tilde{\la} = \min \{ \la_0, \la_1 \} > 0$. 
    With these, we first show the estimates of the derivatives of $u^+$. Straightforward computations give
    \begin{align*}
        u^+_t
        &=
        (1 - \rho_1) \tilde{u}^+_t
        +
        \rho_1' d_t ( \rho_2 -  \tilde{u}^+ )
        + q'
        ,~
        \pvi u^+
        \\
        &=
        (1 - \rho_1) \pvi \tilde{u}^+
        +
        \rho_1' \pvi d ( \rho_2 -  \tilde{u}^+ ).
    \end{align*}
    Next, in view of \eqref{cond:q_bound} and \eqref{eq:r_bounded_1} we obtain that 
    \begin{align}\label{eq:deri_x1_re}
        | \pvi \pvj \tilde u^+ |
        +
        | \pvi \tilde u^+ \pvj \tilde u^+ | 
        \le&
        \dfrac{C_r}{\e^2} e^{- \tilde{\la} |z_d|}
        \le
        \dfrac{C_r}{\e^2} e^{- \tilde{\la} \tilde d_0/2\e}.
    \end{align}
    Similar to this, using \eqref{eq:deri_t}, \eqref{eq:deri_x0} and Lemmas \ref{lem:d}, \ref{lem:TW-est} ,\ref{lem:TWlin_est} we obtain that
    \begin{align}\label{eq:deri_x0_re}
        |\tilde{u}^+_t|
        +
        |\pvi \tilde{u}^+|
        &\le
        \dfrac{C}{\e} e^{- \tilde\la |z_d|} 
        \le 
        \dfrac{C}{\e} e^{- \tilde\la \tilde{d}_0/2\e},
    \end{align}
    for some positive constant $C$.
    To bound these derivatives, we first choose $\e_0 > 0$ small enough that satisfies 
    \begin{align}\label{eq:e^p_bound}
        \dfrac{1}{\e^4} e^{- \tilde{\la} \tilde{d_0}/2\e}
        =
        \dfrac{1}{(\tilde{\la} \tilde{d_0})^4} 
        \left( 
            \dfrac{\tilde{\la} \tilde{d_0} }{\e} 
        \right)^4
        e^{- \tilde{\la} \tilde{d_0}/2\e}
        \le 1,
    \end{align}
    for any $\e \in (0,\e_0)$; this is possible since $z^4 e^{- z/2} \to 0$ as $z \to \infty$.
    Thus, combining \eqref{eq:rho_bound}, \eqref{eq:rho2-u}, \eqref{eq:deri_x1_re}, \eqref{eq:deri_x0_re}, \eqref{eq:e^p_bound} and the fact that $0  < \e < 1, 0 \le \rho_1 \le 1$ we obtain that
    \begin{align*}
        |u_t^+ - q' |
        \le&
        C'\e^3
        \\
        |\pvi u^+|
        \le&
        |\pvi \tilde{u^+}|
        + \Vert \rho_1 \Vert_{C^2(\R)} | (\rho_2 - \tilde{u}^+) \pvi d  | 
        \le
        C' \e^3
        \\
        | \pvi \pvj u^+ |
        \le&
        | \pvi \pvj \tilde{u}^+ |
        +
        | \pvi \pvj (\rho_1 (\rho_2 - \tilde{u}^+)) |
        \\
        \le&
        2| \pvi \pvj \tilde{u}^+ |
        +
        \Vert \rho_1 \Vert_{C^1{(\R)}} 
        \left\{\left| \pvi \tilde{u}^+ \pvj d \right|  +  \left| \pvj \tilde{u}^+ \pvi d \right|\right\}
        \\
        &+
        \Vert \rho_1 \Vert_{C^2{(\R)}} 
        \left| (\rho_2 - \tilde{u}^+) \pvi \pvj d \right|
        \\
        \le&
        C'\e^2,
        \\
        |\pvi u^+ \pvj u^+|
        \le&
        C' \e^2
    \end{align*}
    where $C'$ is some positive constant. With this inequality, noting that $||D||_{C^3(\R)} \le C_D$ by \eqref{cond:D_pos},  we obtain that
    \begin{align}\label{eq:s'1}
        |u^+_t - q' |
        +
        |\pvi(\D(u^+) \pvj u^+)|
        &\le
        S'_1 \e^2 ,
    \end{align}
    where we omitted $\sum_{i,j = 1}^N$ and $S'_1$ is some positive constant.

    We now estimate $f(u^+)$. This time, we make a Taylor expansion at $\rho_2$, which gives
    \begin{align}\label{eq:Taylor'}
        f(u^+)
        =
        f(\rho_2)
        + f'(\rho_2) (\vp' + q)
        + \dfrac{f''(\theta'(x,t))}{2}(\vp' + q)^2,
    \end{align}
    where $\vp' = (1 - \rho_1)(\tilde{u}^+ - \rho_2)$ and $\theta'$ is some constant between $\rho_2$ and $u^+$. Note that, since $\rho_2$ is either $\a_+$ or $\a_-$, we have $f(\rho_2) = 0$. Also, by using  \eqref{eq:rho_bound} and \eqref{eq:e^p_bound} we obtain that
    \begin{align}\label{eq:s'2}
        |f'(\rho_2)\vp'|
        \le
        |f'(\rho_2)(\tilde{u}^+ - \rho_2)|
        \le
        S'_2 \e^4
        \le
        S'_2 \e^2
    \end{align}
    for some positive constant $S'_2$, where first inequality holds since $0 \le \rho_1 \le 1$.  Also, noting that $C_f := - \max\{ f'(\a_+), f'(\a_-) \} > 0$ by \eqref{cond:f-bistable} we obtain
    \begin{align}
        \e^2 q'
        -f'(\rho_2)q 
        &\ge
        - \sigma \beta^2 e^{- \beta t / \e^2}
        + \e^4 \sigma L^2 e^{Lt}
        + C_f q\nonumber
        \\
        &=
        (C_f - \beta) \sigma \beta e^{- \beta t / \e^2}
        + (\e^2 L + C_f) \e^2\sigma L e^{Lt}
        \ge S'_3 q\label{eq:s'3}
    \end{align}
    for some positive constant $S'_3$ and choosing $\beta > 0$ small enough.
    Then, recalling $|\vp'| \le C \e^4$ and $0 \le q \le C \sigma$ we obtain that
    \begin{align}\label{eq:s'4}
        \dfrac{|f''(\theta'(x,t))|}{2}(\vp' + q)^2
        \le S'_4(\e^2 + \sigma q),
    \end{align}
    for some positive constant $S'_3$. With this we can estimate $f(u^+)$.

    With \eqref{eq:s'1}, \eqref{eq:s'2}, \eqref{eq:s'3} and \eqref{eq:s'4}, noting that $q'$ in \eqref{eq:s'1} and \eqref{eq:s'3} cancel with each other we derive that
    \begin{align*}
        \ml(u^+)
        &\ge
        - (S'_1 + S'_2) \e^2 - S'_4
        + (S'_3 - \sigma S'_4)\dfrac{q}{\e^2}
        \\
        &=
        - (S'_1 + S'_2) \e^2 - S'_4
        + (S'_3  - \sigma S'_4) \sigma L e^{Lt}
        \\
        &
        + (S'_3  - \sigma S'_4) \dfrac{\sigma \beta e^{-\beta t/\e^2}}{\e^2}.
    \end{align*}
    Thus, by choosing $\e_0, \sigma > 0$ small enough and $L$ large enough we finally obtain $\ml(u^+) \ge 0$.

    \item In the set $\{ (x,t) \in \Om \times [0,T - t^\e],~ |d(x,t)| \ge 2 \tilde{d}_0\}$

    Since $u^+$ is constant in spatial variable, we only need to prove $q' - f(\rho_2(d) + q)/\e^2 \ge 0$. For Taylor expansion of $f(u^+)$, we can use \eqref{eq:Taylor'}, where $\vp' = 0$ and $\theta'(x,t)$ is some number between $\rho_2(d(x,t))$ and $u^+(x,t)$. With this, and using \eqref{eq:s'3}, \eqref{eq:s'4} gives 
    \begin{align*}
        q' - f(\rho_2(d) + q)/\e^2
        &\ge
        S'_3 \dfrac{q}{\e^2} - S'_4 \left( 1 + \sigma \dfrac{q}{\e^2} \right)
        \\
        &=
        - S'_4
        +
        \sigma (S'_3 - \sigma S'_4) \left( \dfrac{\beta e^{-\beta t/\e^2}}{\e^2} + L e^{Lt} \right).
    \end{align*}
    Thus, by choosing $\sigma$ small enough and $L$ large enough we obtain $\ml(u^+) \ge 0$. This completes the proof of Lemma \ref{lem:Prop_subsuper}.
    \end{enumerate}    
\end{proof}

\subsection{Proof of Theorem \ref{thm:prop}}

We now prove Theorem \ref{thm:prop}. For this, we need two steps: (i) for large enough $K>0$ in $p(t)$ we prove that $u^-(x,t) \leq u^\e(x,t + t^\e) \leq u^+(x,t)$ for $(x,t) \in \Om \times [0,T - t^\e]$ and (ii) we prove the desired result. Once we prove (i) in $\Om \times [0,T - t^\e]$, it is enough to prove the assertion (ii) in $\Om'_T := \{ (x,t) \in \Om \times [0,T - t^\e],~ |d(x,t)| \le \tilde{d}_0\}$; this is because the assertion describes the solution $u^\e$ away from the interface $\Ga_t$ with distance of order $\o(\e)$ and outside of $\Om'_T$ the sub- and super-solutions $u^\pm$ is already close enough to $\a_\pm$.
\begin{enumerate}
    \item[Step 1] We assume that we choose $L, \e_0$ and $\sigma$ such that $u^\pm$ becomes a pair of sub- and super-solutions as in Lemma \ref{lem:Prop_subsuper}. Since $\sigma, \e_0 > 0$ were chosen to be small enough, by choosing smaller if necessary, we can assume that
    \begin{align}\label{eq:thm2_pf0}
        \sigma ( \beta + \e^2_0 L e^{LT}) \le \dfrac{\eta_p}{4}
        ,~~
        \e_0 C_1 
        \le \sigma\beta/4
        \le \eta_p/4,
    \end{align}
    where the last inequality holds since $\sigma \beta \le \eta_p$ by the first inequality.
    Then by letting $\eta_g = \sigma\beta/2$, by choosing $\e_0$ smaller if necessary the result of Theorem \ref{thm:gen} holds for some $M_0 > 0$. By \eqref{cond:gamma0_normal} and \eqref{cond:u0_inout} and the fact that $\Ga_0 = \{ x \in \Om, d(x,0) = 0 \}$  we can find a positive constant $M_1$ such that
    \begin{eqnarray*}
	\text{if } &d(x,0) \leq - M_1 \e~~  &\text{ then } ~~ u_0(x) \le \a - M_0 \e,
	\\
	\text{if } &d(x,0) \geq M_1 \e~~  &\text{ then } ~~ u_0(x) \ge \a + M_0 \e.
    \end{eqnarray*}
    Define step functions $H^\pm(x)$ by
    \begin{align*}
    H^\pm(x)
	:=
	\begin{cases}
		\a_+ \pm {\eta_g}
		&
		~~
		\text{ if }
		d(x,0) \ge \mp M_1 \e,
		\\
		\a_- \pm {\eta_g}
		&
		~~
		\text{ if }
		d(x,0) < \mp M_1 \e.
	\end{cases}
    \end{align*}
    Then the observation above with Theorem \ref{thm:gen} gives that
    \begin{align*}
        H^-(x) 
        \le
        u^\e(x,t^\e)
        \le
        H^+(x)
        ,~~
        \text{for}~~
        x \in \Om.
    \end{align*}

    Next we adjust $u^\pm(x,0)$ to satisfy $u^-(x,0) \le H^-(x) , H^+(x) \le u^+(x,0)$; then by Lemma \ref{lem:comparison} we can bound $u^\e(x, t + t^\e)$ with $u^\pm(x, t)$. We only prove the later inequality; the other inequality can be proved in a similar way. For this, we first take $K>0$ sufficiently large such that 
    \begin{align}\label{eqn:prop_proof-1}
        \us( - M_1 + K; e) &\ge \a_+ - \frac{\eta_g}{2} = \a_+ - \frac{\sigma \beta}{4},
    \end{align}
    for all $e \in \mathbb{S}^{N - 1}$. 
    Then, by \eqref{eq:thm2_pf0} and Lemma \ref{lem:TWlin_est} if $|d(x,0)| \le \tilde{d}_0$ we obtain that 
    \begin{align*}
        u^+(x,0)
        =&
        \us + \e \ul + q(0)
        \\
        \ge&
        \us
        \left(
            \dfrac{d}{\e} + K; \nabla d
        \right)
        - \e C_1 
        + \sigma\beta
        + \e^2 L
        \\
        \ge&
        \us
        \left(
            \dfrac{d}{\e} + K; \nabla d
        \right)
        + 3\sigma\beta/4
    \end{align*}
    Thus, by \eqref{eqn:prop_proof-1} we have
    \begin{align*}
        u^+(x,0)
        \ge
        \a_+ + {\eta_g }
        = H^+(x)
    \end{align*}
     for $- M_1 \e \le d(x,0) \le \tilde{d}_0$. And for $ - \tilde{d}_0 \le d(x,0) \le - M_1 \e $, above computation gives
     \begin{align*}
         u^+(x,0)
         &\ge
         \a_-  + 3 \sigma \b/4
         \ge
         \a_- + \eta_g
         =
         H^+(x)
         .
     \end{align*}
     For $|d(x,0)| \ge \tilde{d}_0$, using \eqref{eq:rho2-u} and \eqref{eq:e^p_bound}, assuming $\e^4(C_0 + \e C_1) \le \sigma \beta/2$ we obtain 
     \begin{align*}
         u^+(x,0)
         &\ge
         \rho_2(d)
         -
         |(1 - \rho_1(d))(\tilde{u}^+ - \rho_2(d))|
         + q(0)
         \\
         &\ge \a_- - \e^4(C_0 + \e C_1) + q(0)
         \ge \a_- - \e^4(C_0 + \e C_1) + \sigma \beta
         \\
         &\ge \a_- + \sigma \beta / 2
         = \a_- + \eta_g
         = H^+(x),
     \end{align*}
     if $d(x,0) \le - \tilde{d}_0$ and
     \begin{align*}     
         u^+(x,0)
         &\ge \a_+ - \e^4(C_0 + \e C_1) + q(0)
         \ge \a_+ - \e^4(C_0 + \e C_1) + \sigma \beta
         \\
         &\ge \a_+ + \sigma \beta / 2
         = \a_- + \eta_g
         = H^+(x),
     \end{align*}
     if $d(x,0) \ge \tilde{d}_0$. This implies that $u^\e(x,t^\e) \le u^+(x,0)$, and similar computations will leads to $u^-(x,0) \le u^\e(x,t^\e)$. Thus, by Lemma \ref{lem:comparison} we proved the assertion; $u^-(x,t) \le u^\e(x,t + t^\e) \le u^+(x,t)$ for $(x,t) \in \Om \times [0,T - t^\e]$.
     
    \item[Step 2]
    We now show the results of Theorem \ref{thm:prop} in $\Om'_T$. Choose $C_p$ large enough such that 
    \begin{align*}
        \us(C_p - L - K; e) \ge \a_+ - \eta_p/2
        ,~~
        \us(-C_p + L + K; e) \le \a_- + \eta_p/2,
    \end{align*}
    for all $e \in \mathbb{S}^{N - 1}$. 
    Thus, since $u^-(x,t) \le u^\e(x,t) \le u^+(x,t)$, if $d(x,t) \ge  C_p\e$ using \eqref{eq:thm2_pf0} we have
    \begin{align*}
        u^\e(x,t + t^\e) 
        &\ge 
        u^-(x,t)
        \\
        &=
        \us(z_d;\nabla d) + \e \ul(z_d;\nabla d) - q
        \\
        &\ge
        \us(C_p - L - K;\nabla d)        
        - \e C_1 - \sigma(\beta + \e^2 L e^{Lt})
        \\
        &\ge
        \a_+ - \eta_p.
    \end{align*}
    And using similar computation, if $d(x,t) \le - C_p\e$ we obtain 
    \begin{align*}
        u^\e(x,t + t^\e)
        \le u^+(x,t)
        \le \a_- + \eta_p.
    \end{align*}
    And lastly, since $|\e \ul| + |q| \le \eta_p/2$ by \eqref{eq:thm2_pf0}, we can see for all $x \in \Om, t \in [0, T - t^\e]$ that 
    \begin{align*}
        \a_- - \eta_p \le u^-(x,t)
        \le
        u^\e(x,t + t^\e)
        \le
        u^+(x,t) \le \a_+ + \eta_p,
    \end{align*}
    which proves the results of Theorem \ref{thm:prop}.

\end{enumerate}

 \section*{Acknowledgement}

The authors thank Yuning Liu for his comment on the approach due to the $\Gamma$-convergence.


\begin{thebibliography}{99}


\bibitem{AHM2008}
	{\sc M.\ Alfaro, D.\ Hilhorst, H.\ Matano},
	{\it The singular limit of the Allen-Cahn equation and the FitzHugh-Nagumo system},
	Journal of Differential Equations, {\bf 2} (2008), 505--565.

\bibitem{AGHMS2009}
	{\sc  M.\ Alfaro, H.\ Garcke,  D.\ Hilhorst, H.\ Matano, R.\ Schätzle},
	{\it Motion by anisotropic mean curvature as sharp interface limit of an inhomogeneous and anisotropic Allen-Cahn equation},
	Proceedings of the Royal Society of Edinburgh Section A: Mathematics,  {\bf 140(4)} (2009), 673--706. 

\bibitem{Arena1972}
	{\sc O.\ Arena},
	{\it A strong maximum principle for quasilinear parabolic differential inequalities},
	Proceedings of the American Mathematical Society, {\bf 32(2)} (1972), 497--502.

\bibitem{BP1996}
	{\sc  G.\ Bellettini, M.\ Paolini },
	{\it Anisotropic motion by mean curvature in the context of Finsler geometry},
	Hokkaido Math. J,.  {\bf 25(3)} (1996),  537--566. 

\bibitem{BHW2004}
	{\sc  M.\ Benes, D.\ Hilhorst, R.\ Weidenfeld },
	{\it Interface dynamics for an anisotropic Allen-Cahn equation},
	Banach Center Publications,  {\bf 66(1)} (2004),  39--45. 


\bibitem{EFHPS}{\sc P.\ El Kettani, T.\ Funaki, D.\ Hilhorst, H.\ Park and 
S.\ Sethuraman},
{\it Mean curvature interface limit from Glauber$+$Zero-range interacting particles},
Comm.\ Math.\ Phys., {\bf 394} (2022), 1173--1223.

\bibitem{EFHPS-2}
{\sc P.\ El Kettani, T.\ Funaki, D.\ Hilhorst, H.\ Park and S.\ Sethuraman},
{\it Singular limit of an Allen-Cahn equation with nonlinear diffusion},
Tunisian J.\ Math., {\bf 4} (2022), 719--754.

\bibitem{FH1988}
{\sc	Paul Chase Fife, Ling Hsiao},
{\it The generation and propagation of internal layers},
Nonlinear Analysis: Theory, Methods $\&$ Applications,
{\bf 12}  (1988), 19--41.

\bibitem{F18}{\sc T.\ Funaki},
{\it Hydrodynamic limit for exclusion processes},
Comm.\ Math.\ Statis., {\bf 6} (2018), 417--480.

\bibitem{F23}{\sc T.\ Funaki}, 
{\it Interface motion from 
Glauber-Kawasaki dynamics of non-gradient type},
preprint.

\bibitem{FvMST}{\sc T.\ Funaki, P.\ van Meurs, S.\ Sethuraman and K.\ Tsunoda},
{\it Motion by mean curvature from Glauber-Kawasaki dynamics with 
speed change}, J.\ Stat.\ Phys., {\bf 190} (2023), Article no.\ 45, 1--30.

\bibitem{FUY}{\sc T.\ Funaki, K.\ Uchiyama and H.-T.\ Yau},
{\it Hydrodynamic limit for lattice gas reversible under Bernoulli measures},
Nonlinear Stochastic PDEs: Hydrodynamic Limit and Burgers' Turbulence,
The IMA Volumes in Mathematics and its Applications, {\bf 77} (1995), 1--40.


\bibitem{GNS1998}
{\sc	H.\ Garcke, B.\ Nestler, B.\ Stoth },
{\it On anisotropic order parameter models for multi-phase systems and their sharp interface limits},
Physica D: Nonlinear Phenomena,
{\bf 115(1-2)}  (1998), 87-108.


\bibitem{GG}{\sc Y.\ Giga, \sc S.\ Goto  },
{\it Geometric evolution of phase-boundaries},
On the Evolution of Phase Boundaries. The IMA Volumes in Mathematics and its Applications, {\bf 43}  (1992) 51--65

\bibitem{KL}{\sc C.\ Kipnis and C.\ Landim},
{\it Scaling Limits of Interacting Particle Systems},
Grundlehren der Mathematischen Wissenschaften, {\bf 320},
Springer, 1999, xvi+442 pp. 


\bibitem{NMHS1999}
{\sc K.\ Nakamura, H.\ Matano, D.\ Hilhorst, R.\ Sch\"atzle },
{\it Singular Limit of a Reaction-Diffusion Equation with a Spatially Inhomogeneous Reaction Term},
J.\ Stat.\ Phys., {\bf 95}  (1999), 1165--1185.

\bibitem{Sp91}{\sc H. Spohn},
{\it Large Scale Dynamics of Interacting Particles},
Texts and Monographs in Physics,
Springer, 1991, xi+342 pp.



\end{thebibliography}
\end{document}